\documentclass[12pt,a4paper,reqno]{amsart}
\usepackage[colorlinks,pdfstartview=FitH,citecolor=blue,linkcolor=red,urlcolor=blue]{hyperref}

\usepackage[noadjust]{cite}
\usepackage{tikz}\usetikzlibrary{matrix,decorations.pathreplacing,positioning}
\usepackage{xcolor}
\usepackage{footnote}
\usepackage{hyperref}
\usepackage{amsmath,amsthm,amssymb,bm}
\usepackage{calrsfs}
\usepackage{rotating}

\usepackage{multirow}
\usepackage{array}
\newcolumntype{x}[1]{>{\centering\arraybackslash\hspace{0pt}}p{#1}}
\usepackage{empheq}
\usepackage{wasysym}
\usepackage{caption}
\usepackage{subcaption}

\usepackage{enumitem}
\setitemize{itemsep=-1pt}
\setenumerate{itemsep=-1pt}

\usepackage[margin=2.6cm]{geometry}
\usepackage{pgfplots}
\pgfplotsset{width=10cm,compat=1.9}

\definecolor{myg}{RGB}{220,220,220}
\usepackage{etoolbox}
\AtEndEnvironment{example}{\null\hfill$\blacktriangle$}%
\usepackage{caption}

\newtheorem{theorem}{Theorem}[section]
\newtheorem{corollary}[theorem]{Corollary}
\newtheorem{proposition}[theorem]{Proposition}
\newtheorem{lemma}[theorem]{Lemma}

\newtheorem{definition}[theorem]{Definition}

\newtheorem{remark}[theorem]{Remark}

\newcommand*{\myproofname}{Proof of the claim}


\newcommand{\numberset}{\mathbb}

\newcommand{\R}{\numberset{R}}

\newcommand{\F}{\numberset{F}}

\newcommand{\mC}{\mathcal{C}}

\newcommand{\mA}{\mathcal{A}}

\newcommand{\mB}{\mathcal{B}}

\newcommand{\Fq}{\F_q}

\newcommand{\colsp}{\textnormal{colsp}}
\newcommand{\rowsp}{\textnormal{rowsp}}

\newcommand\qqbin[2]{\left[\begin{matrix} #1 \\ #2 \end{matrix} \right]}

\DeclareMathOperator{\dd}{d}

\newcommand{\rk}{\textnormal{rk}}




\DeclareMathOperator{\GL}{GL}

\DeclareMathOperator{\Alt}{Alt}

\DeclareMathOperator{\Mat}{Mat}





\newlength{\mynodespace}
\setlength{\mynodespace}{6.5em}


\allowdisplaybreaks

\title{Eigenvalue bounds and alternating rank-metric codes}

\usepackage[foot]{amsaddr}
\author{Aida Abiad$^{1}$}
\author{Gianira N. Alfarano$^{1,2}$}
\author{Alberto Ravagnani$^1$}
\address{$^1$Department of Mathematics and Computer Science, Eindhoven University of Technology, The Netherlands.} 
\address{$^2$School of Mathematics and Statistics, University College Dublin, Ireland.}
\email{a.abiad.monge@tue.nl, gianira.alfarano@gmail.com, a.ravagnani@tue.nl}

\date{}

\begin{document}

\thispagestyle{empty}

\begin{abstract}
   In this note we apply a spectral method to the graph of alternating bilinear forms. In this way, we obtain upper  bounds on the size of an alternating rank-metric code for given values of the minimum rank distance. We computationally compare our results with Delsarte's linear programming bound, observing that they give the same value. For small values of the minimum rank distance, we are able to establish the equivalence of the two methods. The problem remains open for larger values.
\end{abstract}
\maketitle

\section{Introduction}

A fundamental problem in coding theory is to determine the largest possible size of an error-correcting code with given minimum distance. This problem has been investigated for several distance functions, most notably for the Hamming distance.

In his PhD dissertation, Delsarte proposed a linear programming (LP) method to derive bounds for the cardinality of an error-correcting code based on association schemes. The LP method applies to several distance functions and often performs very well. 
One potential drawback of Delsarte's approach is the necessity to explicitly compute the parameters of the underlying association scheme, a task that is not always easy. The parameters have been successfully computed for the graphs associated with various metrics, including the Hamming metric~\cite{delsarte1973algebraic}, the rank metric~\cite{D1978,delsarte1975alternating}, 
permutation codes \cite{DIL2020}, and the Lee metric \cite{A1982,S1986}, 
but also for newer metrics such as the subspace distance \cite{R2010}.

In recent years, it was observed that spectral graph theory can also be applied to derive bounds on the cardinality of error-correcting codes, sometimes outperforming other approaches, see \cite{AKR2024,ANR2024}. Since methods based on spectral graph theory are often easier to apply than Delsarte's LP method,
it is natural to ask how they compare to each other. This is precisely the question we address in this note, for the case of alternating rank-metric codes.

This note focuses on the case of codes made of alternating matrices
endowed with the rank distance, from the algebraic perspective. For this metric, we provide evidence that the recent spectral graph theory bounds from \cite{ACF2019} (the so-called Ratio-type bounds) and their LP implementation \cite{F2020} are equally good as Delsarte's LP approach \cite{delsarte1975alternating}, although much easier to apply. We also show that the classical coding approaches for bounding the size of  alternating rank-metric codes do not provide any improvement compared with the algebraic approaches.

The rest of the paper is organized as follows. Section \ref{sec:preliminaries} presents the needed preliminaries. Section \ref{sec:neweigenvaluebounds} contains the core of the paper: we study the graph of alternating bilinear forms in detail, we apply the Ratio-type eigenvalue bounds for the $k$-independence number of this graph and we compare the output with Delsarte's LP output. In Section \ref{sec:newcodingbounds} we use a number of methods from coding theory in order to derive bounds on the size of alternating rank-metric codes. We conclude with some remarks in Section \ref{sec:conclusion}.


\section{Preliminaries}\label{sec:preliminaries}
 
Throughout the paper, $q$ denotes a prime power and $\Fq$ the finite field with~$q$ elements. For a positive integer $n$,
we denote by $\Mat_{n}(\Fq)$ the space of $n\times n$ matrices with entries in $\Fq$. For a given matrix $M\in\Mat_{n}(\Fq)$, we denote by $\colsp(M)$ the subspace of $\F_q^n$ spanned by the columns of $M$ and by $\rowsp(M)$ the subspace of $\F_q^n$ spanned by the rows of $M$. We denote by $O$ the $n\times n$ zero-matrix.  The standard basis vector of $\Fq^n$ is denoted by $\{e_1,\ldots,e_n\}$.

\subsection{Alternating Rank-Metric Codes}
In this subsection we  give the needed definitions on alternating rank-metric codes.

\begin{definition}
A matrix $A\in\Mat_{n}(\F_q)$ is said to be \textbf{alternating} or \textbf{skew-symmetric} if $A=-A^\top$ and $A_{i,i}=0$ for all $i\in\{1,\ldots,n\}$. We denote by $\Alt_{n}(\Fq)$ the space of $n\times n$ alternating matrices with entries in $\Fq$.
\end{definition}

 We endow the space $\Alt_n(\F_q)$ with the \textbf{rank-metric}, which is defined as follows
 $$\rk(A,B) = \mathrm{rk}\,(A-B), \qquad \mbox{ for } A,B\in\Alt_n(\F_q).$$
 The \textbf{minimum rank distance} of $\mC$ is defined as
$$d := \dd_\rk(\mC) =\min\{ \rk(A,B) \colon A,B \in \mC,\,\, A\neq B \}.$$
We say that a subset of $\Alt_{n}(\F_q)$ endowed with the rank distance is an \textbf{alternating (rank-metric) code} or a  \textbf{skew-symmetric (rank-metric) code}.
If $\mC$ is a $k$-dimensional $\F_q$-linear subspace  of $\Alt_n(\F_q)$, we say that $\mC$ is a \textbf{linear} alternating code. 

Notice that alternating matrices have always even rank, hence we have that the minimum distance of an alternating code is always even and we usually assume that it is $2d$, for some positive integer $d$. 
We use $A_q(n, 2d)$ to denote the maximal cardinality of an alternating code in $\Alt_n(q)$ with the property that any two distinct codewords are at distance at least $2d$ from each other.

In \cite{delsarte1975alternating}, Delsarte and Goethals considered precisely the problem of finding
an upper bound on $A_q(n,2d)$ using Delsarte's linear programming (LP) method; see Section \ref{sec:Delsarte} for a brief explanation and \cite{delsarte1973algebraic,delsarte1998association} for a detailed treatment. This resulted in the following bound relating the parameters of an alternating code. Due to the similarity with the well-known Singleton bound for rank metric-codes (\cite{D1978}), we refer to it as Singleton-like bound for alternating codes.

\begin{theorem}[\cite{delsarte1975alternating}]\label{thm:del-goeth}
Let $\mC\subseteq  \Alt_{n}(\F_q)$ be an alternating code. Then 
\begin{equation}\label{eq:SingletonBound}
    A_q(n,2d)\leq q^{\frac{n(n-1)}{2\left\lfloor \frac{n}{2}\right\rfloor}\left( \left\lfloor \frac{n}{2}\right\rfloor-d+1 \right)}.
\end{equation}
\end{theorem}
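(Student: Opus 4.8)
The plan is to mimic the classical puncturing/shortening argument that yields the Singleton bound in the Hamming and rank metrics, adapted to the alternating setting where the combinatorial bookkeeping must respect the even-rank constraint. First I would reduce to the case of a \emph{linear} alternating code: given an arbitrary code $\mC$ with minimum distance $2d$, one can pass to a translate and then invoke the fact that for association-scheme-based (distance-invariant) settings it suffices to bound $\dim$-type quantities; concretely, since $\Alt_n(\F_q)$ is an $\F_q$-vector space of dimension $\binom{n}{2}$, and the bound we are after is a pure power of $q$, the cleanest route is to prove $|\mC|\le q^{N}$ with $N=\tfrac{n(n-1)}{2\lfloor n/2\rfloor}(\lfloor n/2\rfloor-d+1)$ directly by an anticode/intersection argument that does not require linearity.

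The core step I would carry out is a \textbf{restriction map}. Fix a subset $S\subseteq\{1,\dots,n\}$ and consider the map $\pi_S\colon \Alt_n(\F_q)\to \Alt_{|S|}(\F_q)$ that extracts the principal submatrix indexed by $S$. The key observation is that if $A-B$ has rank less than $2d$ then, choosing $S$ appropriately, $\pi_S(A)=\pi_S(B)$ forces $A=B$; more precisely one wants: whenever $A\in\Alt_n(\F_q)$ has $\rk(A)<2d$, there is a large set of coordinates on which $A$ vanishes as an alternating form, i.e. $\pi_S(A)=O$ for some $|S|$ as large as possible. Since $A$ alternating of rank $2r$ has a radical of dimension $n-2r$, restricting to any subspace/coordinate set of dimension $>$ (something like) $n-2(d-1)$ will not in general kill $A$ on the nose, but restricting to a carefully chosen coordinate set will. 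The subtlety — and this is where the factor $\tfrac{n(n-1)}{2\lfloor n/2\rfloor}$ enters rather than a naive $\binom{\text{something}}{2}$ — is that one must \emph{average} the restriction over many choices of $S$: summing $\dim(\ker\pi_S\cap\mC)$ or the analogous count over all $S$ of a fixed size, and using that each nonzero codeword difference of rank $\ge 2d$ survives in at most a controlled fraction of the restrictions. This averaging is exactly the mechanism that produces the exponent $\tfrac{n(n-1)}{2\lfloor n/2\rfloor}(\lfloor n/2\rfloor - d+1)$: the numerator $n(n-1)=2\binom{n}{2}$ is the full dimension, $\lfloor n/2\rfloor$ is the maximal possible rank-radius $\maxdim$, and $\lfloor n/2\rfloor-d+1$ is the "corrected dimension" one expects from a Singleton-type slack.

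The main obstacle I anticipate is getting the averaging constant exactly right — in particular establishing the clean identity that the number of coordinate subsets $S$ of the relevant size on which a fixed alternating matrix of rank $2r$ restricts to zero behaves multiplicatively enough that the bound collapses to a single power of $q$ with the stated exponent, rather than to a messier expression. A secondary obstacle is the parity/floor gymnastics when $n$ is odd, since then $\maxdim=\lfloor n/2\rfloor=(n-1)/2$ and the principal-submatrix dimension count $\binom{|S|}{2}$ does not divide $\binom{n}{2}$ evenly; one resolves this by working with the normalized quantity $\binom{n}{2}/\lfloor n/2\rfloor$ throughout (which is why the theorem is stated with that fraction out front) and checking that all intermediate counts are integers. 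If a direct puncturing argument proves recalcitrant, the fallback — and this is in fact how \cite{delsarte1975alternating} originally obtained it — is to run Delsarte's LP bound from Section \ref{sec:Delsarte}: plug the dual eigenvalues of the alternating-forms scheme into the linear program, choose the feasible polynomial that is the alternating-scheme analogue of the one giving the Singleton bound in the ordinary rank-metric case, and read off \eqref{eq:SingletonBound} as the resulting LP value. I would present the puncturing proof as the main line and remark that it recovers the Delsarte--Goethals LP estimate.
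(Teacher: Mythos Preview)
The paper does not give its own proof of Theorem~\ref{thm:del-goeth}: the result is imported from Delsarte--Goethals \cite{delsarte1975alternating}, where it is obtained via the LP bound on the alternating-forms association scheme. So your ``fallback'' is in fact the proof on record, and there is nothing further to compare your LP remark against.

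Your primary line, however, has a genuine gap. The restriction $\pi_S$ to a principal submatrix does \emph{not} behave like puncturing in the ordinary rank metric: the kernel of $\pi_S$ (alternating matrices whose $S\times S$ block vanishes) is not an anticode. For instance with $n=4$ and $S=\{1,2\}$, the matrix $\left(\begin{smallmatrix}0&I_2\\-I_2&0\end{smallmatrix}\right)$ lies in $\ker\pi_S$ and has full rank $4$. Hence $\pi_S$ need not be injective on a code of minimum distance $2d$ for any fixed $S$, and the averaging you sketch does not repair this --- summing over $S$ yields at best a Plotkin-type estimate, not a Singleton-type one, and you give no mechanism by which the sharp exponent $\tfrac{n(n-1)}{2\lfloor n/2\rfloor}(\lfloor n/2\rfloor-d+1)$ would emerge. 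More tellingly, the paper's own Section~\ref{sec:newcodingbounds} runs the closest elementary argument available --- the Code--Anticode bound with the genuine anticode $\Alt_n(U)$ --- and obtains the strictly weaker inequality~\eqref{eq:Sing_Anticode}, which matches~\eqref{eq:SingletonBound} only in the trivial case $n=2d$. This is strong evidence that no puncturing/anticode argument of the shape you propose reaches~\eqref{eq:SingletonBound}; the LP (or the equivalent spectral route of Section~\ref{sec:neweigenvaluebounds}) appears to be genuinely required.
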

We call the alternating codes attaining the Singleton-like bound from Eq.~\eqref{eq:SingletonBound} \textbf{alternating MRD codes}. 

Note that $2 \left\lfloor \frac{n}{2}\right\rfloor$ is the maximum rank that a matrix $M\in\Alt_{n}(\F_q)$ can have. Moreover the quantity $\frac{n(n-1)}{2\left\lfloor \frac{n}{2}\right\rfloor}$ is $n$ or $n-1$ according to the parity of $n$.

\begin{remark}
Delsarte and Goethals \cite{delsarte1975alternating} also showed that alternating MRD codes exist for $n$ odd and any $q$, or for $n$ even and $q$ even. Showing the existence of alternating MRD codes for $n$ even and $q$ odd is still an open question.
\end{remark}

\subsection{Algebraic Graph Theory Tools} In this subsection we provide some graph theory definitions which will be used in Section \ref{sec:neweigenvaluebounds}.

Let $\Gamma = (V,E)$ be a graph with adjacency matrix $A$, having distinct eigenvalues $\theta_0 > \cdots > \theta_r$. 
Let $u,v$ be two vertices in $V$. We define the \textbf{geodesic distance} between $u$ and $v$ as the minimum length of a path joining them and we denote it by  $\dd_\mathrm{g}(u,v)$.
\begin{definition}
    A connected graph $\Gamma$ is called  \textbf{distance-regular} if it is regular of degree $\delta$ and if for every two vertices $u,v$ such that $\dd_\mathrm{g}(u,v)=i$ there are exactly $c_i$ neighbours $z$ of $v$ such that $\dd_\mathrm{g}(z,u)=i-1$ and there are exactly $b_i$ neighbours $z$ of $v$ such that $\dd_\mathrm{g}(z,u)=i+1$.
\end{definition}

The sequence 
$$\iota(\Gamma):=\{b_0,\ldots,b_{D(\Gamma)}; c_1,\ldots, c_{D(\Gamma)}\},$$
where $D(\Gamma)$ is the diameter of $\Gamma$, is called the \textbf{the intersection array} of $\Gamma$. 
Let
$$a_i = \delta - b_i-c_i, \quad \textnormal{ for } i=0,\ldots, D(\Gamma).$$
The numbers $a_i, b_i, c_i$ are called the \textbf{intersection numbers} of $\Gamma$. Note that $a_i$ represents the number of neighbours $z$ of $v$ such that $\dd_\mathrm{g}(z,u)=i$, for every $u,v\in V$ such that  $\dd_\mathrm{g}(u,v)=i$.

\begin{definition}
    A graph $\Gamma$ is \textbf{strongly regular} if it is regular and for some given integers $\lambda,\mu\geq 0$, every two adjacent vertices of $\Gamma$ have $\lambda$ common neighbours, and every two non-adjacent vertices of $\Gamma$ have $\mu$ common neighbours.
\end{definition}

\begin{definition}
A graph is \textbf{$l$-partially walk-regular} if for any vertex~$v$ and any positive integer $i\leq l$ the number of closed walks of length~$i$ that start and end in~$v$ does not depend on the choice of~$v$. A graph is \textbf{walk-regular} if it is \mbox{$l$-partially} walk-regular for any positive integer~$l$.
\end{definition}

\begin{definition}
For $k\ge 1$, the $k$-\textbf{independence number}  of a graph $\Gamma$, denoted $\alpha_k(\Gamma)$, is the maximum number of vertices that are mutually at (geodesic) distance $\dd_\mathrm{g}$ greater than~$k$. Note that for $k=1$ we obtain the classical independence number of a graph,~$\alpha(\Gamma)$. 
\end{definition}

\subsection{Delsarte's Linear Programming}\label{sec:Delsarte}
In this subsection we give a short overview of Delsarte's linear program. For a detailed treatment, we refer the interested reader to~\cite{delsarte1973algebraic, delsarte1998association}.

Let $\Gamma$ be a regular graph with diameter $D$ and let $\mC\subseteq V(\Gamma)$. The \textbf{distance distribution} of $\mC$ is the vector $(\beta_0,\ldots,\beta_D)$ where $\beta_i$ is the mean number of vertices in $\mC$ at distance $i$ in $\Gamma$ from a given vertex $u\in\mC$, i.e.
$$\beta_i=\frac{1}{|\mC|}\sum_{u\in\mC}|\Gamma_{i}(u)\cap \mC|,$$
where $\Gamma_i(u)=\{v\in\mC| \dd_\mathrm{g}(u,v)=i\}$.
Consider $\mC$ to be a $(d-1)$-independent set in $\Gamma$, then, clearly, $\beta_0=1$ and $\beta_i=0$ for $1\leq i\leq d-1$. Delsarte proved that the distance distribution of any code must satisfy certain inequalities expressed in terms of the $Q$-numbers of the Bose-Mesner algebra of $\Gamma$. 

Note that $\sum_{i=0}^D\beta_i=|\mC|$. Therefore, to upper bound the number of codewords, the objective
of the linear program is to maximize $\sum_0^D\beta_i$. Hence, the linear program results as in Eq.~\eqref{DelsarteLP}.
\begin{equation}\label{DelsarteLP}
\boxed{
\begin{array}{ll@{}ll}
\text{maximize}  &\sum_{i=0}^D \beta_i &\\
\text{subject to} &\sum_{i=0}^D\beta_iQ_j(i)\geq 0, & \quad\textnormal{ for all } 1\leq j\leq k\\
&\beta_0=1 & \\
&\beta_i=0, & i=1,\ldots,d-1\\
&\beta_i\geq0, &\quad i=d,\dots,D\\
\end{array}
}
\end{equation}

\section{The graph $\Gamma(\Alt_n(\F_q))$ and  eigenvalue bounds}\label{sec:neweigenvaluebounds}
In this section we introduce and study spectral properties of the alternating bilinear forms graph, denoted $\Gamma(\Alt_n(\F_q))$. Later on we will use these properties to derive the eigenvalue upper bounds on the size of alternating rank-metric codes.

\begin{definition}
Let $\Gamma(\Alt_n(\F_q))$ be the \textbf{alternating bilinear forms graph}, i.e. the graph whose vertices are the distinct matrices of $\Alt_n(\F_q)$ and whose edges are the pairs $(A,B)\in (\Alt_n(\F_q))^2$ with the property that $\rk(A-B)=2$. 
\end{definition}

Before we can derive the main results of this section, we need some preliminary results.
The following is well-known. 

\begin{lemma}\label{lem:sumrank2}
    Let $A\in\Alt_n(\F_q)$ be an alternating matrix with rank $2r$ for some integer~$r$. Then $A$ can be written as sum of $r$ many rank $2$ alternating matrices.
\end{lemma}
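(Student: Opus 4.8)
The plan is to reduce $A$ to its congruence normal form. Over any finite field $\F_q$, an alternating matrix $A$ of rank $2r$ is congruent to a direct sum of $r$ hyperbolic planes and a zero block: there exists $P\in\GL_n(\F_q)$ with
$$P^\top A P \;=\; \bigoplus_{i=1}^{r}\begin{pmatrix} 0 & 1\\ -1 & 0\end{pmatrix}\;\oplus\; O_{n-2r}.$$
This is the statement that the alternating bilinear form $(x,y)\mapsto x^\top A y$ admits a symplectic basis on a complement of its radical, and it holds in every characteristic; I would simply invoke this as standard linear algebra.

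Next I would split the right-hand side as $\sum_{i=1}^{r} E_i$, where $E_i\in\Alt_n(\F_q)$ agrees with $\left(\begin{smallmatrix}0 & 1\\ -1 & 0\end{smallmatrix}\right)$ on the rows and columns indexed by $2i-1,2i$ and vanishes elsewhere; each $E_i$ is alternating of rank exactly $2$. Conjugating back, set $B_i:=(P^\top)^{-1}E_iP^{-1}$, so that $A=\sum_{i=1}^{r}B_i$. Two routine checks then finish the argument: (i) congruence preserves the property of being alternating — if $y^\top E_i y=0$ for all $y$, then $y^\top B_i y=(P^{-1}y)^\top E_i (P^{-1}y)=0$ for all $y$, and this characterises alternating matrices (which is exactly what is needed, since in characteristic $2$ "alternating" is strictly stronger than "skew-symmetric"); and (ii) congruence by an invertible matrix preserves rank, so $\rk(B_i)=\rk(E_i)=2$.

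The only point requiring a little care is the characteristic-$2$ case: there one must genuinely use the "alternating" hypothesis (not merely $A=-A^\top$), both for the existence of the symplectic normal form and for the preservation of structure under congruence. Since the argument above is uniform in $q$, no case split is actually necessary. Alternatively, one can avoid the normal form and argue by induction on $r$: if $A\neq O$, choose $u$ with $u^\top A\neq 0$ and then $v$ with $u^\top Av=1$; the form restricted to $\spn\{u,v\}$ is a hyperbolic plane, and passing to a basis adapted to the decomposition $\F_q^n=\spn\{u,v\}\oplus\spn\{u,v\}^{\perp_A}$ exhibits a rank-$2$ alternating summand $E$ with $A-E$ alternating of rank $2r-2$, so the induction hypothesis applies. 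I expect the congruence argument to be the cleanest to write up; there is no real obstacle here, this being a classical fact.
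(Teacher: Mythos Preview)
Your argument is correct. The paper, however, does not actually prove this lemma: it merely introduces it with ``The following is well-known'' and states the result without proof, then immediately uses it to compute geodesic distances in $\Gamma(\Alt_n(\F_q))$. Your congruence-to-symplectic-normal-form argument (and the equivalent inductive peeling-off-a-hyperbolic-plane variant you sketch) is precisely the standard justification behind that ``well-known'', and your remark about needing the genuine alternating hypothesis in characteristic~$2$ is on point. There is nothing to compare against and no gap to flag.
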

\begin{lemma}\label{lem:geo_distance}
    The geodesic distance between two vertices $X,Y$ in $\Gamma(\Alt_n(\F_q))$ coincides with the rank distance between $X$ and $Y$ divided by $2$.
\end{lemma}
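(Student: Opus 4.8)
The plan is to show both inequalities $\dd_\mathrm{g}(X,Y) \le \rk(X-Y)/2$ and $\dd_\mathrm{g}(X,Y) \ge \rk(X-Y)/2$ separately, exploiting that the graph is vertex-transitive under translation $A \mapsto A + M$ (for $M \in \Alt_n(\F_q)$), so it suffices to treat the case $Y = O$ and set $Z := X - Y = X$, an alternating matrix of rank $2r$ for some $r$.

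For the upper bound, I would invoke Lemma~\ref{lem:sumrank2}: write $Z = Z_1 + \cdots + Z_r$ with each $Z_i \in \Alt_n(\F_q)$ of rank $2$. Then the sequence of partial sums $O,\ Z_1,\ Z_1+Z_2,\ \ldots,\ Z_1+\cdots+Z_r = Z$ is a walk in $\Gamma(\Alt_n(\F_q))$ of length $r$ from $O$ to $Z$, since consecutive terms differ by a rank-$2$ alternating matrix $Z_{i+1}$. Hence $\dd_\mathrm{g}(O,Z) \le r = \rk(Z)/2$.

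For the lower bound, I would argue that each single step in a walk changes the rank by at most $2$. Concretely, if $A \sim B$ in the graph, then $\rk(A - C) $ and $\rk(B - C)$ differ by at most $\rk(A-B) = 2$ for any $C$ (subadditivity of rank: $\rk(A-C) \le \rk(A-B) + \rk(B-C)$ and symmetrically). So along any walk $O = W_0 \sim W_1 \sim \cdots \sim W_\ell = Z$, the rank $\rk(W_i)$ increases by at most $2$ at each step; since $\rk(W_0) = 0$ and $\rk(W_\ell) = 2r$, we get $\ell \ge r$. Taking the minimum over all walks gives $\dd_\mathrm{g}(O,Z) \ge r = \rk(Z)/2$. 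Combining the two bounds yields $\dd_\mathrm{g}(X,Y) = \rk(X-Y)/2$, and along the way one checks this is a well-defined integer because alternating matrices have even rank.

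The routine subadditivity argument for the lower bound is entirely standard, and the upper bound is immediate from the quoted Lemma~\ref{lem:sumrank2}; the only point requiring a moment's care is making the translation-invariance reduction explicit and confirming that a minimal-length walk indeed exists (which is automatic once connectivity is established, itself a consequence of the upper bound applied to all pairs). I do not anticipate a genuine obstacle here — the main content has been pushed into Lemma~\ref{lem:sumrank2}.
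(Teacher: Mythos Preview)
Your proposal is correct and follows the same core idea as the paper: both use Lemma~\ref{lem:sumrank2} to decompose $X-Y$ as a sum of $r$ rank-$2$ alternating matrices, yielding a path of length $r$ and hence $\dd_\mathrm{g}(X,Y)\le r$. The paper's proof actually stops there and simply asserts that the geodesic distance \emph{is} $r$, whereas you supply the missing lower bound via the rank-subadditivity argument along a walk; so your write-up is, if anything, more complete than the paper's.
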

\begin{proof}
    Let $X,Y\in\Alt_n(\F_q)$ be such that $\rk(X-Y)=2r$ for some $0\leq r\leq \lceil\frac{n}{2}\rceil$. Then, by Lemma \ref{lem:sumrank2} there exist $r$ matrices $A_i\in\Alt_n(\F_q)$ of rank $2$ such that $X-Y=\sum_{i=1}^r A_i$, i.e. the geodesic distance between $X$ and $Y$ considered as vertices of $\Gamma(\Alt_n(\F_q))$ is $r$.
\end{proof}

Recall the following formula for counting the number of alternating matrices of a given rank obtained \cite{ravagnani2018duality}.

\begin{lemma}\cite[Proposition 62]{ravagnani2018duality}\label{lem:rank_i}
    Let $0\leq i\leq n$ be an integer. It holds that 
    $$|\{M\in\Alt_n(\F_q) : \rk(M)=i\}| = \qqbin{n}{i}\sum_{s=0}^i(-1)^{i-s}q^{\binom{s}{2}+\binom{i-s}{2}}\qqbin{i}{s}.$$
\end{lemma}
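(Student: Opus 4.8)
The plan is to reduce the count to the number $g(i)$ of nondegenerate alternating $i\times i$ matrices over $\F_q$, and then to pin down $g(i)$ by a $q$-binomial inversion.

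First I would identify each $M\in\Alt_n(\F_q)$ with the alternating bilinear form $b_M(x,y)=x^\top M y$ on $\F_q^n$, observing that the matrix kernel $\ker M$ equals the radical of $b_M$, so that $\rk(M)=n-\dim\ker M$. For a fixed subspace $W\subseteq\F_q^n$ with $\dim W=n-i$, the map sending $M$ to the form it induces on the quotient $\F_q^n/W$ is a bijection from $\{M\in\Alt_n(\F_q):\ker M=W\}$ onto the set of nondegenerate alternating forms on $\F_q^n/W\cong\F_q^i$. The size $g(i)$ of the latter set depends only on $i$, not on $W$ or $n$; this independence can also be seen from the $\GL_n(\F_q)$-equivariance of $M\mapsto P^\top M P$, which preserves rank and permutes the subspaces of each fixed dimension transitively. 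Since $\F_q^n$ has exactly $\qqbin{n}{i}$ subspaces of dimension $n-i$, this yields
\[
|\{M\in\Alt_n(\F_q):\rk(M)=i\}|=\qqbin{n}{i}\,g(i),
\]
so it remains to show $g(i)=\sum_{s=0}^{i}(-1)^{i-s}q^{\binom{s}{2}+\binom{i-s}{2}}\qqbin{i}{s}$.

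Next I would record the identity characterizing $g$. Counting \emph{all} alternating $i\times i$ matrices according to the codimension $j$ of their radical — there are $\qqbin{i}{j}$ subspaces of $\F_q^i$ of codimension $j$, each of which is the radical of exactly $g(j)$ alternating matrices, and there are $q^{\binom{i}{2}}$ alternating $i\times i$ matrices in total since such a matrix is freely determined by its $\binom{i}{2}$ strictly-upper-triangular entries — gives
\[
q^{\binom{i}{2}}=\sum_{j=0}^{i}\qqbin{i}{j}\,g(j)\qquad\text{for every }i\ge 0 .
\]
Now I would invoke the $q$-analogue of binomial inversion: if $b_i=\sum_{j=0}^{i}\qqbin{i}{j}a_j$ for all $i$, then $a_i=\sum_{j=0}^{i}(-1)^{i-j}q^{\binom{i-j}{2}}\qqbin{i}{j}b_j$. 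Taking $b_j=q^{\binom{j}{2}}$ and $a_j=g(j)$ produces
\[
g(i)=\sum_{j=0}^{i}(-1)^{i-j}q^{\binom{i-j}{2}+\binom{j}{2}}\qqbin{i}{j},
\]
and renaming $j$ as $s$ and multiplying by $\qqbin{n}{i}$ gives exactly the claimed formula. As a consistency check, for $i$ odd the summands pair up under $s\leftrightarrow i-s$ with equal magnitude and opposite sign, so $g(i)=0$, matching the nonexistence of a nondegenerate alternating form on an odd-dimensional space.

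The only genuinely non-routine step is the bijective argument of the first paragraph — that the number of alternating matrices with a prescribed radical is $g(i)$ independently of the radical, and that this quantity is intrinsic to $i$ — together with keeping the exponent $\binom{i-j}{2}$ in the inversion formula correct; the double counting and the remaining manipulations are straightforward.
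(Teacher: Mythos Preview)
The paper does not supply its own proof of this lemma; it is quoted as a known result with a citation to \cite[Proposition~62]{ravagnani2018duality}. Your argument is correct and is in fact the standard route to this formula: reduce the count to $\qqbin{n}{i}g(i)$ via the bijection between alternating matrices with prescribed radical $W$ and nondegenerate alternating forms on $\F_q^n/W$, then determine $g(i)$ by M\"obius inversion on the subspace lattice (equivalently, the $q$-binomial inversion you wrote down) applied to the totals $|\Alt_i(\F_q)|=q^{\binom{i}{2}}$. The bijective step, the inversion formula with exponent $\binom{i-j}{2}$, and the parity check for odd $i$ are all in order.
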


\begin{proposition}\label{graphdegree}
    Let $\delta=\frac{(q^n-1)(q^{n-1}-1)}{q^2-1}$ be the number of alternating matrices in $\Alt_n(\F_q)$ with rank $2$. Then $\Gamma(\Alt_n(\F_q))$ is $\delta$-regular. 
\end{proposition}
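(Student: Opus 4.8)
The plan is to first observe that $\Gamma(\Alt_n(\F_q))$ is invariant under the translation action of the additive group $(\Alt_n(\F_q),+)$, so that all vertices have the same degree, and then to compute that common degree by specializing Lemma~\ref{lem:rank_i} to $i=2$.

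First I would fix an arbitrary vertex $A\in\Alt_n(\F_q)$. Since $\Alt_n(\F_q)$ is closed under addition, every $B\in\Alt_n(\F_q)$ can be written uniquely as $B=A+M$ with $M\in\Alt_n(\F_q)$, and then $\rk(A-B)=\rk(-M)=\rk(M)$. Hence the neighbours of $A$ are exactly the matrices $A+M$ with $\rk(M)=2$, and the map $M\mapsto A+M$ is a bijection from $\{M\in\Alt_n(\F_q):\rk(M)=2\}$ onto the neighbourhood of $A$ in $\Gamma(\Alt_n(\F_q))$. Therefore the degree of $A$ equals $|\{M\in\Alt_n(\F_q):\rk(M)=2\}|$, a quantity independent of $A$, which already shows that $\Gamma(\Alt_n(\F_q))$ is regular.

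It then remains to check that this cardinality equals $\delta$. Applying Lemma~\ref{lem:rank_i} with $i=2$, the prefactor is $\qqbin{n}{2}=\frac{(q^n-1)(q^{n-1}-1)}{(q^2-1)(q-1)}$, and the inner sum over $s\in\{0,1,2\}$ has terms $q$, $-(q+1)$, and $q$ (using $\binom{0}{2}=0$, $\binom{1}{2}=0$, $\binom{2}{2}=1$ and $\qqbin{2}{1}=q+1$), which add up to $q-1$. Multiplying the two factors, the $q-1$ cancels and one obtains precisely $|\{M\in\Alt_n(\F_q):\rk(M)=2\}|=\frac{(q^n-1)(q^{n-1}-1)}{q^2-1}=\delta$, as claimed.

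There is no real obstacle here: the only point requiring a little care is the bookkeeping of the alternating signs and $q$-powers in the inner sum of Lemma~\ref{lem:rank_i}, together with the cancellation of the factor $q-1$ against the denominator of $\qqbin{n}{2}$. As an independent sanity check (not needed for the proof), one may count rank-$2$ alternating matrices directly: each such matrix corresponds to an $(n-2)$-dimensional radical together with a nondegenerate alternating form on the $2$-dimensional quotient, i.e.\ $\qqbin{n}{2}\cdot(q-1)$ possibilities, matching the value of $\delta$ above.
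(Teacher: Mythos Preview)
Your proof is correct and follows essentially the same approach as the paper: both use translation invariance of the rank metric to reduce the degree of an arbitrary vertex to the count of rank-$2$ alternating matrices, and then invoke Lemma~\ref{lem:rank_i} with $i=2$. You simply spell out the bijection and the evaluation of the inner sum more explicitly than the paper does.
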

\begin{proof}
    The rank distance is translation invariant, i.e. for every $X,Y,Z\in\Alt_n(\F_q)$ we have $\rk(X,Y)=\rk(X+Z,Y+Z)$. This implies that the number of neighbors of any $X\in\Gamma(\Alt_n(\F_q))$ is the same as the number of neighbors of the zero matrix $O$. Such number is the amount of alternating matrices of rank $2$. By applying the formula from Lemma \ref{lem:rank_i} we obtain that this is precisely $\delta$.
\end{proof}

Using Lemma \ref{lem:geo_distance} we can derive the following result, which we will use later on to apply the spectral bounds on the $k$-th independence number $\alpha_k$ to our graph $\Gamma(\Alt_n(\F_q)$.

\begin{lemma}\label{lemma:equivalence}
Let $n\geq d\geq 2$. Then $A_q(n,2d)=\alpha_{d-1}\big(\Gamma(\Alt_n(\F_q)) \big)$.
\end{lemma}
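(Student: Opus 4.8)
The plan is to unwind both sides of the claimed equality to a common combinatorial description and then observe that the reformulation via Lemma~\ref{lem:geo_distance} makes them literally the same quantity. Recall that $A_q(n,2d)$ is, by definition, the largest size of a subset $\mC\subseteq\Alt_n(\F_q)$ such that any two distinct $A,B\in\mC$ satisfy $\rk(A-B)\geq 2d$; equivalently, $\rk(A-B)/2\geq d$, i.e. $\rk(A-B)/2>d-1$. On the other hand, $\alpha_{d-1}\big(\Gamma(\Alt_n(\F_q))\big)$ is the largest size of a set of vertices that are pairwise at geodesic distance strictly greater than $d-1$. So it suffices to show that these two "separation" conditions on a pair of matrices coincide.

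First I would fix two distinct matrices $X,Y\in\Alt_n(\F_q)$ and invoke Lemma~\ref{lem:geo_distance}, which gives $\dd_\mathrm{g}(X,Y)=\rk(X-Y)/2$ (here one should note that $\rk(X-Y)$ is even because alternating matrices have even rank, so the right-hand side is a genuine nonnegative integer, and the geodesic distance is finite since $\Gamma(\Alt_n(\F_q))$ is connected by Lemma~\ref{lem:sumrank2}). Then the condition $\rk(X-Y)\geq 2d$ is equivalent to $\dd_\mathrm{g}(X,Y)\geq d$, which since $\dd_\mathrm{g}(X,Y)$ is an integer is in turn equivalent to $\dd_\mathrm{g}(X,Y)>d-1$. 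Thus a subset $\mC$ has minimum rank distance at least $2d$ precisely when every pair of distinct elements of $\mC$ is at geodesic distance greater than $d-1$ in $\Gamma(\Alt_n(\F_q))$, i.e. precisely when $\mC$ is a $(d-1)$-independent set.

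Having matched the feasible sets on both sides, I would conclude by taking maxima: the maximum cardinality over all alternating codes of minimum distance at least $2d$ equals the maximum cardinality over all $(d-1)$-independent sets of $\Gamma(\Alt_n(\F_q))$, which is exactly $\alpha_{d-1}\big(\Gamma(\Alt_n(\F_q))\big)$. One should also check the degenerate bookkeeping: the hypotheses $n\geq d\geq 2$ guarantee that $d-1\geq 1$, so $\alpha_{d-1}$ is defined, and that codes of distance $2d$ with $d\leq\lceil n/2\rceil$ are not vacuous; these are minor remarks rather than real content.

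Honestly, there is no serious obstacle here — the statement is essentially a dictionary translation, and all the work has already been done in Lemma~\ref{lem:sumrank2} (connectivity / distance achievable) and Lemma~\ref{lem:geo_distance} (the distance formula). The only thing to be careful about is the off-by-one between "$\geq 2d$" on the rank-distance side and "$>d-1$" on the $k$-independence side, which is where the integrality of the geodesic distance is used; I would make sure to state that explicitly so the equivalence of the two strict/non-strict inequalities is transparent.
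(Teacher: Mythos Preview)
Your proposal is correct and follows essentially the same approach as the paper: both arguments reduce the equality to Lemma~\ref{lem:geo_distance}, translating the rank-distance condition $\rk(X-Y)\ge 2d$ into the geodesic condition $\dd_\mathrm{g}(X,Y)>d-1$. The paper phrases this as two separate inequalities (each direction via a maximal set), while you match the feasible sets directly and take a single maximum, but this is a purely cosmetic difference.
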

\begin{proof}
Let $\mathcal{A}$ be a subset of the set of vertices of $\Gamma(\Alt_n(\F_q))$ and assume that $\mathcal{A}$ is a $(d-1)$-independent set of maximal cardinality. Then, every pair of vertices in $\mathcal{A}$ have geodesic distance at least $d$. By Lemma~\ref{lem:geo_distance}, the rank distance of those matrices is at least $2d$. Hence, $A_q(n,2d)\geq |\mathcal{A}| = \alpha_{d-1}$. Vice versa, let $\mC\subseteq \Alt_n(\F_q)$ be an alternating code with minimum distance $2d$ and maximal cardinality. Consider the subgraph $\Gamma'(\mC)$ of $\Gamma(\Alt_n(\F_q)$ induced by the elements of $\mC$. Then the geodesic distance between every pair of vertices in $\Gamma'(\mC)$ is at least $d$ (since these are matrices with rank distance at least $2d$). Hence, $\alpha_{d-1}\geq |V(\Gamma'(\mC))|=|\mC| = A_q(n,2d)$, where $V(\Gamma'(\mC))$ denotes the set of vertices of $\Gamma'(\mC)$.
\end{proof}

Delsarte and Goethals in \cite{delsarte1975alternating} also computed the eigenvalues of the graph $\Gamma(\Alt_n(\F_q))$, which can be found as follows.

\begin{lemma}\label{graphevs}
The eigenvalues of the adjacency matrix of the graph $\Gamma(\Alt_n(\F_q))$ can be calculated as the evaluation of
\begin{equation}\label{eq:evs}
    P^{(n)}(x) = \frac{q^{2n-2x-1}-q^n-q^{n-1}+1}{q^2-1}, \; \textnormal{ for all } x\in\left\{0,1,\ldots, \left\lfloor\frac{n}{2}\right\rfloor \right\}.
\end{equation}
\end{lemma}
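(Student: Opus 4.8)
The plan is to identify $\Gamma(\Alt_n(\F_q))$ with the graph of an association scheme — the *alternating forms scheme* — and to read off the eigenvalues from the scheme's $P$-matrix. Let me think about how to organize this.

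The alternating bilinear forms scheme is a well-known $P$-polynomial (metric) association scheme, with classes indexed by $0, 1, \dots, \lfloor n/2 \rfloor$, where the relation $R_r$ pairs $(X,Y)$ with $\rk(X-Y) = 2r$. By Lemma 3.3 (geodesic distance = rank distance / 2), the graph $\Gamma(\Alt_n(\F_q))$ is precisely the graph of the first relation $R_1$ of this scheme. Since the scheme is $P$-polynomial with respect to $R_1$, the eigenvalues of $\Gamma(\Alt_n(\F_q))$ are exactly the entries $P_1(j)$ of the first column of the eigenvalue matrix $P$, for $j = 0, 1, \dots, \lfloor n/2 \rfloor$; the associated eigenspaces are the common eigenspaces $V_0, \dots, V_{\lfloor n/2\rfloor}$ of the Bose–Mesner algebra. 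So everything reduces to computing $P_1(j)$.

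For the computation of $P_1(j)$ I see two routes, and I would take the first. **Route 1 (cite Delsarte–Goethals directly).** The eigenvalues of this scheme were computed in \cite{delsarte1975alternating}; one extracts $P_1(j)$ from their formula and simplifies. Concretely, the first eigenvalue can be obtained from the valency $\delta = (q^n-1)(q^{n-1}-1)/(q^2-1)$ (Proposition 3.4) together with the fact that $P_1(j)$ is an affine-linear reparametrization of a (suitably normalized) Eberlein/$q$-Krawtchouk-type polynomial of degree $1$ in the variable $q^{-2j}$; matching the known value at $j=0$ (which must be $\delta$) and the known value at $j = \lfloor n/2 \rfloor$ pins down the two coefficients, yielding
\[
P^{(n)}(j) = \frac{q^{2n-2j-1} - q^n - q^{n-1} + 1}{q^2 - 1}.
\]
A sanity check: at $j=0$ this gives $(q^{2n-1} - q^n - q^{n-1} + 1)/(q^2-1) = (q^{n-1}-1)(q^n-1)/(q^2-1) = \delta$, as required. **Route 2 (self-contained).** Alternatively, one can compute $\sum_j m_j P_1(j)^t$ for $t = 0, 1, 2$ by counting closed walks of length $t$ in $\Gamma$ (using Lemma 3.5 for the relevant rank-counts and the multiplicities $m_j = \dim V_j$ of the scheme), then solve the resulting linear system — but this is messier and I would only fall back to it if a clean citation is unavailable.

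The main obstacle is purely bookkeeping: matching the normalization conventions of \cite{delsarte1975alternating} (their ordering of the $Q$- or $P$-matrix, whether they parametrize by $2j$ or $j$, and which $q$-binomial/Gaussian-coefficient sign convention they use) with the clean closed form stated in Eq.~\eqref{eq:evs}. Once the indexing of the eigenspaces by $x \in \{0, 1, \dots, \lfloor n/2 \rfloor\}$ is fixed (with $x=0$ corresponding to the all-ones eigenvector and eigenvalue $\delta$), the identity is an elementary algebraic simplification of their expression; the check at the two endpoints $x=0$ and $x = \lfloor n/2 \rfloor$ confirms the coefficients, and no deeper input is needed.
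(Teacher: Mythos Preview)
Your proposal is correct and follows essentially the same approach as the paper: the paper's proof consists of a single sentence citing Equation~(15) in \cite{delsarte1975alternating}, which is precisely your Route~1. Your exposition is more detailed (identifying $\Gamma(\Alt_n(\F_q))$ as the first relation of the alternating forms scheme and sanity-checking the formula at $x=0$), but the underlying argument is identical.
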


\begin{proof}
This is a straightforward consequence of Equation (15) in \cite{delsarte1975alternating}.
\end{proof}

Note that $P^{(n)}(0)=\delta$, as expected by Proposition \ref{graphdegree} and it is the largest eigenvalue of $\Gamma(\Alt_n(\F_q))$. The eigenvalues that we will use later are $P^{(n)}(\left\lfloor\frac{n}{2}\right\rfloor)$, $P^{(n)}(\left\lfloor\frac{n}{2}\right\rfloor-1)$, and $P^{(n)}(\left\lfloor\frac{n}{2}\right\rfloor-2)$. We write here their explicit value according to the parity of $n$. 
If $n$ is even, then 
\begin{align*}
    P^{(n)}\left(\frac{n}{2}\right)&=\frac{1-q^n}{q^2-1},\\
    P^{(n)}\left(\frac{n}{2}-1\right)&=\frac{q^{n+1}-q^n-q^{n-1}+1}{q^2-1},\\
    P^{(n)}\left(\frac{n}{2}-2\right)&=\frac{q^{n+3}-q^n-q^{n-1}+1}{q^2-1}.
\end{align*}
If $n$ is odd, then
\begin{align*}
    P^{(n)}\left(\frac{n-1}{2}\right)&=\frac{1-q^{n-1}}{q^2-1},\\
    P^{(n)}\left(\frac{n-1}{2}-1\right)&=\frac{q^{n+2}-q^n-q^{n-1}+1}{q^2-1},\\
    P^{(n)}\left(\frac{n-1}{2}-2\right)&=\frac{q^{n+4}-q^n-q^{n-1}+1}{q^2-1}.
\end{align*}

\begin{lemma}\label{lem:order_evs}
    For every integer $n\geq 2$ we have that $P^{(n)}(\left\lfloor\frac{n}{2}\right\rfloor)<-1$ and $P^{(n)}(x)>0$ for every $x\in\{0,1,\ldots,\left\lfloor\frac{n}{2}\right\rfloor-1\}$. Moreover, $P^{(n)}(x)$ is decreasing.
\end{lemma}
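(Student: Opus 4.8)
The plan is to treat $P^{(n)}$ as essentially a single shifted exponential. Writing $P^{(n)}(x) = \frac{q^{2n-2x-1} - q^{n-1}(q+1) + 1}{q^2-1}$, the denominator is positive since $q \ge 2$, and the only $x$-dependent term in the numerator is $q^{2n-2x-1}$. Since the exponent $2n-2x-1$ strictly decreases as $x$ increases and $q > 1$, the term $q^{2n-2x-1}$ is strictly decreasing in $x$, so $P^{(n)}$ is strictly decreasing on $\{0,1,\dots,\lfloor n/2\rfloor\}$. This already settles the last assertion, and it is the engine for the other two.

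For positivity on $\{0,\dots,\lfloor n/2\rfloor - 1\}$, by the monotonicity just established it suffices to bound the smallest value, attained at $x = \lfloor n/2\rfloor - 1$. There the exponent equals $2n - 2\lfloor n/2\rfloor + 1$, which is $n+1$ if $n$ is even and $n+2$ if $n$ is odd, hence in all cases at least $n+1$. Therefore $q^{2n-2x-1} \ge q^{n+1} \ge 2q^n = q^n + q^n > q^n + q^{n-1}$, so the numerator exceeds $1$ and $P^{(n)}(x) > 0$ for every $x$ in the stated range at once.

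For $P^{(n)}(\lfloor n/2\rfloor) < -1$, I would substitute $x = \lfloor n/2\rfloor$ and use the explicit values recorded just before the statement, splitting on the parity of $n$: for $n$ even the value is $\frac{1-q^n}{q^2-1}$ and $\frac{1-q^n}{q^2-1} < -1$ rearranges to $q^n > q^2$; for $n$ odd the value is $\frac{1-q^{n-1}}{q^2-1}$ and the inequality rearranges to $q^{n-1} > q^2$. In both cases one is left with a plain comparison of two powers of $q$, i.e.\ with $q^{2\lfloor n/2\rfloor} > q^2$, which is checked directly.

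The argument is elementary throughout, and I do not foresee a genuine obstacle. The only point that needs care is the bookkeeping between the exponent $2n-2x-1$ and the floor $\lfloor n/2\rfloor$ together with its parity: one must pin down the exponent accurately both when checking that $q^{2n-2x-1}$ dominates $q^n + q^{n-1}$ (for the positivity statement) and when comparing $q^{2\lfloor n/2\rfloor}$ with $q^2$ (for the strict bound below $-1$).
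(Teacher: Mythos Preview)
Your approach mirrors the paper's: establish that $P^{(n)}$ is strictly decreasing, then verify the two endpoint inequalities. The paper differentiates $P^{(n)}$ (treating $x$ as a real variable) to get monotonicity and then simply \emph{asserts} that $P^{(n)}(\lfloor n/2\rfloor)<-1$ and $P^{(n)}(\lfloor n/2\rfloor-1)>0$; your discrete argument for monotonicity and your explicit numerator estimates are in fact more complete than what the paper writes down.

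One genuine caveat deserves mention. The final comparison you leave to be ``checked directly,'' namely $q^{2\lfloor n/2\rfloor}>q^2$, fails for $n\in\{2,3\}$: there $2\lfloor n/2\rfloor=2$ and one computes $P^{(n)}(\lfloor n/2\rfloor)=\frac{1-q^2}{q^2-1}=-1$, so the strict inequality $P^{(n)}(\lfloor n/2\rfloor)<-1$ does not hold. This is a defect in the lemma's stated hypothesis $n\ge2$ rather than in your method---the paper's own proof glosses over the same boundary cases, and every application in the paper has $n\ge4$ anyway---but since you flagged precisely this bookkeeping as the delicate point, it is worth recording that carrying it out reveals the lemma needs $n\ge4$ (or the conclusion weakened to $\le -1$).
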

\begin{proof}
   It is easy to see that $P^{(n)}(x)$ is decreasing. For instance consider that the derivative $P^{(n)\prime}(x)=-\frac{2\log(q)q^{2n-2x-1}}{q^2-1}<0$ for every $x\in \left\{0,\ldots,\lfloor\frac{n}{2}\rfloor\right\}$. Moreover, $P^{(n)}(\left\lfloor\frac{n}{2}\right\rfloor)<-1$ for every $n\geq 2$. Finally, since $P^{(n)}((\left\lfloor\frac{n}{2}\right\rfloor -1)>0$ and $P^{(n)}(x)$ is decreasing, the statement follows.
\end{proof}

The intersection array of the alternating forms graph $\Gamma(\Alt_n(q))$ is also completely determined.

\begin{theorem}\cite[Theorem 9.5.6]{brouwer1989distance}\label{thm:drg}
    For $i\in\left\{0,\ldots,\lfloor\frac{n}{2}\rfloor\right\}$, the graph $\Gamma(\Alt_n(\F_q))$ is distance-regular with intersection array given by 
    \begin{align*}
        b_i &=\frac{q^{4i}(q^{n-2i}-1)(q^{n-2i-1}-1)}{q^2-1},\\
        c_i &=\frac{q^{2i-2}(q^{2i}-1)}{q^2-1},\\
        a_i&= \delta - b_i-c_i,
    \end{align*}
    where $\delta=\frac{(q^n-1)(q^{n-1}-1)}{q^2-1}$.
\end{theorem}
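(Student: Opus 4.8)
The statement is precisely \cite[Theorem 9.5.6]{brouwer1989distance} once one translates geodesic distance in $\Gamma(\Alt_n(\F_q))$ into rank distance via Lemma \ref{lem:geo_distance}, so the fastest route is to quote that reference and then check that the intersection numbers recorded there agree with the closed forms for $b_i,c_i$ written above; that last check is a routine simplification of $q$-binomial expressions. Below I outline a reasonably self-contained alternative, since all the needed ingredients appear in the excerpt.

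First I would establish distance-regularity and the diameter. The graph is vertex-transitive via the translations $X\mapsto X+Z$ already used in the proof of Proposition \ref{graphdegree}; in addition, the congruence action $X\mapsto PXP^{\top}$ with $P\in\GL_n(\F_q)$ fixes the zero matrix $O$, preserves rank distance (hence is a graph automorphism), and, by the classification of alternating bilinear forms over a field, acts transitively on the set of alternating matrices of each fixed rank $2i$, i.e. on each sphere $\Gamma_i(O)$. Combining this with Lemma \ref{lem:geo_distance}, for any vertices $u,v$ with $\dd_\mathrm{g}(u,v)=i$ the quantities $|\Gamma_1(v)\cap\Gamma_{i-1}(u)|$ and $|\Gamma_1(v)\cap\Gamma_{i+1}(u)|$ depend only on $i$, so $\Gamma(\Alt_n(\F_q))$ is distance-regular; its diameter is $\lfloor n/2\rfloor$, the largest half-rank occurring in $\Alt_n(\F_q)$. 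Moreover, adding a rank-$2$ alternating matrix to a matrix of rank $2i$ produces a matrix whose rank differs from $2i$ by at most $2$ and has the same parity, so every neighbour of $v$ lies in $\Gamma_{i-1}(u)\cup\Gamma_i(u)\cup\Gamma_{i+1}(u)$; together with Proposition \ref{graphdegree} this already yields $a_i=\delta-b_i-c_i$, and it remains to compute $b_i$ and $c_i$.

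To compute $b_i$ and $c_i$ I would translate $u$ to $O$ and bring $A:=v$ to the block-diagonal canonical form consisting of $i$ copies of $\left(\begin{smallmatrix}0&1\\-1&0\end{smallmatrix}\right)$ followed by an $(n-2i)\times(n-2i)$ zero block. A neighbour of $A$ has the form $A+R$ with $R=xy^{\top}-yx^{\top}$ a rank-$2$ alternating matrix, and as $U:=\colsp(R)$ ranges over the $\qbin{n}{2}{q}$ two-dimensional subspaces there are exactly $q-1$ such $R$ with that column space (consistently with Proposition \ref{graphdegree}). For $c_i$ one counts the $R$ with $\rk(A+R)=2i-2$, for $b_i$ the $R$ with $\rk(A+R)=2i+2$; in both cases the answer depends only on how $U$ sits relative to the radical $\ker A$ (of dimension $n-2i$) and relative to the nondegenerate symplectic structure that $A$ induces on $\F_q^{n}/\ker A$. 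Stratifying the rank-$2$ matrices $R$ along these lines and summing the contributions should produce $c_i=\frac{q^{2i-2}(q^{2i}-1)}{q^2-1}$ and $b_i=\frac{q^{4i}(q^{n-2i}-1)(q^{n-2i-1}-1)}{q^2-1}$.

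The genuinely delicate point is this stratified count, especially for $b_i$: one has to track whether $U$ is contained in, transverse to, or partially inside $\ker A$, and, in the transverse cases, whether the off-diagonal contributions of $R$ are compatible with a rank jump, which requires a little symplectic linear algebra over $\F_q$. Everything else — vertex-transitivity, the diameter, and the relation $a_i=\delta-b_i-c_i$ — is formal. As consistency checks one can verify $b_0=\delta$, $c_0=0$, $c_1=1$ directly from the formulas, confirm $\sum_i|\Gamma_i(O)|=q^{n(n-1)/2}$ using Lemma \ref{lem:rank_i}, and match the resulting intersection array against the eigenvalues $P^{(n)}$ of Lemma \ref{graphevs} through the standard tridiagonal recurrence for distance-regular graphs.
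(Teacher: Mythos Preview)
The paper does not prove this theorem at all: it is stated with the citation \cite[Theorem 9.5.6]{brouwer1989distance} and no argument is given. Your opening sentence, which simply invokes that reference together with Lemma~\ref{lem:geo_distance}, is therefore already exactly what the paper does, and nothing further is required.

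The self-contained sketch you add is extra relative to the paper, but it is sound. The distance-transitivity argument via translations and the congruence action $X\mapsto PXP^{\top}$ is the standard way to obtain distance-regularity here; the diameter and the relation $a_i=\delta-b_i-c_i$ follow as you say; and your parametrisation of rank-$2$ alternating matrices by pairs $(U,\lambda)$ with $U$ a $2$-dimensional subspace and $\lambda\in\F_q^{\times}$ is correct and recovers Proposition~\ref{graphdegree}. You are also right that the only genuinely nontrivial step is the stratified count of those $R$ according to the position of $\colsp(R)$ relative to $\ker A$ and to the symplectic form induced on $\F_q^{n}/\ker A$; this is where the closed forms for $b_i$ and $c_i$ come from, and it does require a careful case analysis you have not carried out in full. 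If you want to keep the proof self-contained rather than citing \cite{brouwer1989distance}, that case analysis is what needs to be written out; otherwise your first line suffices and matches the paper.
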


A linear algebra interpretation of the intersection numbers of $\Gamma(\Alt_n(\F_q))$ can be given as follows. For every $A\in\Alt_n(\F_q)$ with $\rk(A)=2i$, we have that 
\begin{align*}
    b_i&=|\{B\in\Alt_n(\F_q) \mid \rk(B)=2, \mbox{ and } \rk(A-B)=2(i-1)\}|,\\
    c_i&=|\{B\in\Alt_n(\F_q) \mid \rk(B)=2, \mbox{ and } \rk(A-B)=2(i+1)\}|,\\
    a_i&=|\{B\in\Alt_n(\F_q) \mid \rk(B)=2, \mbox{ and } \rk(A-B)=2i\}|.
\end{align*}

\begin{remark}
    For every field size $q$, when $n=4,5$, the adjacency matrix of the graphs $\Gamma(\Alt_4(\F_q))$ and $\Gamma(\Alt_5(\F_q))$ has exactly $3$ distinct eigenvalues. Moreover, the largest eigenvalue $\delta$ has multiplicity $1$. Thus, in this case these alternating bilinear forms graphs are strongly regular.
\end{remark}

Next we recall some eigenvalue bounds on the  $k$-independence number of a graph, which we will use from now onward and which will be compared with the Singleton-like bound in Eq.~\eqref{eq:SingletonBound}, provided by Delsarte and Goethals.

The first well-known result is the following bound due to Hoffman and never published; see for instance \cite{haemers1995interlacing}.

\begin{theorem}(Hoffman bound \cite{haemers1995interlacing})\label{thm:Hoffman}
 Let $\Gamma=(V,E)$ be a regular graph with $n$ vertices and adjacency eigenvalues $\lambda_1\geq\dots\geq\lambda_n$. Then 
$$\alpha(\Gamma) = \leq n\frac{-\lambda_n}{\lambda_1-\lambda_n}.$$
\end{theorem}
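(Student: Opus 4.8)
The plan is to use the standard quadratic-form argument for the Hoffman (ratio) bound. Write $d=\lambda_1$ for the degree of $\Gamma$, which by regularity equals the largest adjacency eigenvalue, and let $S\subseteq V$ be an independent set of maximum size $\alpha=\alpha(\Gamma)$. Let $\mathbf{1}\in\R^n$ be the all-ones vector and let $\mathbf{1}_S\in\R^n$ be the characteristic vector of $S$. The first step is to record the two basic identities $\mathbf{1}_S^\top\mathbf{1}_S=\alpha$ and $\mathbf{1}_S^\top A\,\mathbf{1}_S=0$, the latter holding precisely because $S$ induces no edges.

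Next I would decompose $\mathbf{1}_S=\tfrac{\alpha}{n}\mathbf{1}+\mathbf{v}$ with $\mathbf{v}\perp\mathbf{1}$; this is the orthogonal decomposition of $\mathbf{1}_S$ along $\mathrm{span}(\mathbf{1})$ and its complement, and taking squared norms gives $\|\mathbf{v}\|^2=\alpha-\alpha^2/n$. Since $\Gamma$ is $d$-regular we have $A\mathbf{1}=d\mathbf{1}$, and since $A$ is symmetric, $A\mathbf{v}$ is again orthogonal to $\mathbf{1}$; hence the cross terms vanish and $\mathbf{1}_S^\top A\,\mathbf{1}_S=\tfrac{\alpha^2}{n^2}\,\mathbf{1}^\top A\mathbf{1}+\mathbf{v}^\top A\mathbf{v}=\tfrac{d\alpha^2}{n}+\mathbf{v}^\top A\mathbf{v}$.

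The third step is the eigenvalue estimate $\mathbf{v}^\top A\mathbf{v}\ge\lambda_n\|\mathbf{v}\|^2$, valid for every vector because $\lambda_n$ is the smallest eigenvalue of the symmetric matrix $A$. Combining this with $\mathbf{1}_S^\top A\,\mathbf{1}_S=0$ yields $0\ge \tfrac{d\alpha^2}{n}+\lambda_n\big(\alpha-\tfrac{\alpha^2}{n}\big)$. Assuming $\Gamma$ has at least one edge (otherwise $\alpha=n$ and the inequality is read as the limiting case), we have $\lambda_n<d=\lambda_1$; dividing through by $\alpha>0$ and rearranging gives $\tfrac{\alpha}{n}(d-\lambda_n)\le-\lambda_n$, that is, $\alpha\le n\tfrac{-\lambda_n}{\lambda_1-\lambda_n}$, which is the claim.

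I do not expect a genuine obstacle: the only points needing minor care are checking that the cross terms in the quadratic form vanish (which uses both $d$-regularity and the symmetry of $A$) and disposing of the degenerate edgeless graph so that the final division by $\lambda_1-\lambda_n$ is legitimate.
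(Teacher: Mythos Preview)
Your argument is the standard and correct proof of the Hoffman ratio bound: the decomposition $\mathbf{1}_S=\tfrac{\alpha}{n}\mathbf{1}+\mathbf{v}$, the identity $\mathbf{1}_S^\top A\,\mathbf{1}_S=0$, and the Rayleigh estimate $\mathbf{v}^\top A\mathbf{v}\ge\lambda_n\|\mathbf{v}\|^2$ combine exactly as you describe, and the degenerate edgeless case is handled appropriately.

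Note, however, that the paper does not supply its own proof of this theorem at all: it merely states the Hoffman bound as a known result and refers the reader to \cite{haemers1995interlacing}. So there is no ``paper's proof'' to compare against; your write-up is precisely the classical quadratic-form argument one finds in Haemers' treatment via interlacing (or equivalently via the Rayleigh quotient), and nothing further is needed.
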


The above ratio bound was extended as follows.

\begin{theorem}(Ratio-type bound, \cite{ACF2019})\label{thm:hoffman-like} Let $\Gamma=(V,E)$ be a regular graph with $n$ vertices and adjacency eigenvalues $\lambda_1\geq\dots\geq\lambda_n$. Let $p\in\mathbb{R}_k[x]$, and define $W(p)=\max_{u\in V}\{(p(A))_{u,u}\}$ and $\lambda(p)=\min_{i=2,\dots,n}\{p(\lambda_i)\}$. 
Then $$\alpha_k(\Gamma)\leq n\frac{W(p)-\lambda(p)}{p(\lambda_1)-\lambda(p)}.$$
\end{theorem}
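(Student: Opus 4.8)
The plan is to prove the Ratio-type bound by a weight-counting argument applied to the matrix $p(A)$, mimicking the classical proof of the Hoffman bound. First I would fix a $k$-independent set $U \subseteq V$, let $S$ be its characteristic vector, and set $N = |U| = \mathbf{1}^\top S$. The key structural observation is that because $p \in \mathbb{R}_k[x]$ has degree at most $k$, the entry $(p(A))_{u,v}$ depends only on closed/open walks of length $\le k$ between $u$ and $v$; in particular, if $u \neq v$ lie in $U$ then $\dd_\mathrm{g}(u,v) > k$, so there is no walk of length $\le k$ between them and $(p(A))_{u,v} = 0$. Hence $S^\top p(A) S = \sum_{u \in U} (p(A))_{u,u} \le N \cdot W(p)$, giving an upper bound on the quadratic form.

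Next I would bound $S^\top p(A) S$ from below using the spectral decomposition. Since $\Gamma$ is regular with $n$ vertices, $\lambda_1$ is the degree and the all-ones vector $\mathbf{1}/\sqrt{n}$ is a unit eigenvector for $\lambda_1$. Write $S = \frac{N}{n}\mathbf{1} + S'$ where $S' \perp \mathbf{1}$. Then, expanding $S'$ in an orthonormal eigenbasis of the eigenvalues $\lambda_2,\dots,\lambda_n$ and using $p(A)v_i = p(\lambda_i)v_i$, one gets
\[
S^\top p(A) S = \frac{N^2}{n}p(\lambda_1) + \sum_{i \ge 2} p(\lambda_i)\,\langle S, v_i\rangle^2 \ge \frac{N^2}{n}p(\lambda_1) + \lambda(p)\Big(\|S\|^2 - \tfrac{N^2}{n}\Big),
\]
where I used $\|S'\|^2 = \|S\|^2 - N^2/n$ and the definition $\lambda(p) = \min_{i \ge 2} p(\lambda_i)$. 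Since $S$ is a $0/1$ vector, $\|S\|^2 = N$, so the lower bound simplifies to $\frac{N^2}{n}(p(\lambda_1) - \lambda(p)) + N\lambda(p)$.

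Combining the two bounds yields $\frac{N^2}{n}(p(\lambda_1) - \lambda(p)) + N\lambda(p) \le N\, W(p)$, and dividing by $N$ (if $N = 0$ the claim is trivial) and rearranging gives $N \le n\,\frac{W(p) - \lambda(p)}{p(\lambda_1) - \lambda(p)}$, which is exactly $\alpha_k(\Gamma) \le n\,\frac{W(p)-\lambda(p)}{p(\lambda_1)-\lambda(p)}$. One should note that the inequality $\sum_{i\ge 2} p(\lambda_i)\langle S,v_i\rangle^2 \ge \lambda(p)\sum_{i \ge 2}\langle S,v_i\rangle^2$ requires $p(\lambda_1) - \lambda(p) > 0$ for the final division step to preserve the direction of the inequality; this holds in the regime where the bound is meaningful (e.g.\ when $p$ is chosen as in the applications), and otherwise the bound is vacuous.

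The main obstacle, and the only genuinely non-routine point, is justifying that $(p(A))_{u,v} = 0$ for distinct $u,v \in U$: this rests on the fact that $(A^j)_{u,v}$ counts walks of length exactly $j$ from $u$ to $v$, so $(A^j)_{u,v} = 0$ whenever $j < \dd_\mathrm{g}(u,v)$, and since $\deg p \le k < \dd_\mathrm{g}(u,v)$ for $u \neq v$ in a $k$-independent set, every monomial in $p(A)$ contributes zero off-diagonal. Everything else is the standard Rayleigh-quotient manipulation; care is only needed to track that $\Gamma$ being regular is what pins down the top eigenvector as $\mathbf{1}$ and makes the orthogonal-projection step clean.
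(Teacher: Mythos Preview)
The paper does not prove this theorem; it is quoted from \cite{ACF2019} and used as a black box. Your argument is correct and is in fact the standard proof: the off-diagonal vanishing $(p(A))_{u,v}=0$ for $u\neq v$ in a $k$-independent set (because $(A^j)_{u,v}=0$ whenever $j<\dd_\mathrm{g}(u,v)$) gives the upper bound $S^\top p(A)S\le N\,W(p)$, and the Rayleigh-quotient lower bound via the orthogonal splitting $S=\tfrac{N}{n}\mathbf{1}+S'$ gives the other side. Your caveat about needing $p(\lambda_1)>\lambda(p)$ for the final division is exactly right; when this fails the bound is vacuous, which is consistent with how the theorem is applied in the paper (the minor polynomials in the LP \eqref{LPminorpolynomials} are normalized so that $f_k(\theta_0)=1$ and $f_k(\theta_i)\ge 0$, forcing the denominator to be positive).
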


For a fixed $k$, applying the above Ratio-type bound is not straightforward. Indeed one needs to find a polynomial of degree $k$ which minimizes the right-hand side of the inequality above. In \cite{F2020}, an LP program which makes use of the so called \emph{minor polynomials} was proposed in order to find the polynomial $p$ that optimizes the above Ratio-type bound for a given $k$ and for a $k$-partially walk-regular graph $\Gamma$. In this LP program, the input are the distinct eigenvalues of the graph $\Gamma$, ordered as $\theta_0>\cdots>\theta_r$, with respective multiplicities $m(\theta_i)$, $i\in\{0,\dots,r\}$. The \textbf{minor polynomial} $f_k\in\R_k[x]$ is the polynomial that minimizes $\sum_{i=0}^r m(\theta_i) f_k(\theta_i)$. Let $p=f_k$ be defined by $f_k(\theta_0)=x_0=1$ and $f_k(\theta_i)=x_i$ for $i\in\{1,\dots,r\}$, where the vector $(x_1,\dots,x_r)$ is a solution of
the following linear program: 
\begin{equation}\label{LPminorpolynomials}
\boxed{
\begin{array}{ll@{}ll}
\text{minimize}  &\sum_{i=0}^{r} m(\theta_i)x_i &\\
\text{subject to} &f[\theta_0,\dots,\theta_s]=0, & \quad s=k+1,\dots,r\\
&x_i\geq0, &\quad i=1,\dots,r\\
\end{array}
}
\end{equation}
Here, $f[\theta_0,\dots,\theta_m]$ denote $m$-th divided differences of Newton interpolation, recursively defined by $$f[\theta_i,\dots,\theta_j]=\frac{f[\theta_{i+1},\dots,\theta_j]-f[\theta_i,\dots,\theta_{j-1}]}{\theta_j-\theta_i},$$ where $j>i$, starting with $f[\theta_i]=x_i$ for $i\in\{0,\dots,r.\}$.

Note that our graph $\Gamma(\Alt_n(\F_q))$ is distance-regular by Theorem \ref{thm:drg}, and therefore it holds the assumption of partially walk-regularity of the above LP.

For small $k$, some research has been devoted to find the best polynomial of degree $k$ which minimizes the right-hand side of the Ratio-type bound. In the case $k=1$, $\alpha_1$ is exactly the independence number, and by choosing $p(x)=x$ we get the original Hoffman's ratio bound; see Theorem \ref{thm:Hoffman}. For $k=2,3$ such polynomials have also been found in \cite{ACF2019,KN2022}, resulting in the following bounds.

\begin{theorem}[Ratio-type bound, \cite{ACF2019}]\label{thm:hoffman-k=2} Let $\Gamma$ be a regular graph with $n$ vertices and distinct eigenvalues of the adjacency matrix $\theta_0>\theta_1>\dots>\theta_r$ with $r\geq2$. Let $\theta_i$ be the largest eigenvalue such that $\theta_i\leq -1$. 
Then 
$$\alpha_2(\Gamma)\leq n\frac{\theta_0+\theta_i\theta_{i-1}}{(\theta_0-\theta_i)(\theta_0-\theta_{i-1})}.$$ 
Moreover, this is the best possible bound that can be obtained by choosing a polynomial and applying Theorem~\ref{thm:hoffman-like}.    
\end{theorem}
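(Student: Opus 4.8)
The plan is to apply the Ratio-type bound of Theorem~\ref{thm:hoffman-like} with $k=2$ to the single explicit quadratic
\[
  p(x)=(x-\theta_i)(x-\theta_{i-1})\in\R_2[x],
\]
and then to show that no polynomial of degree at most $2$ gives a better bound. First I would make the ingredients of Theorem~\ref{thm:hoffman-like} explicit. As $\Gamma$ is regular of degree $\theta_0$, the diagonal of $p(A)$ is constant for \emph{every} polynomial of degree at most $2$: from $p(A)=A^{2}-(\theta_i+\theta_{i-1})A+\theta_i\theta_{i-1}I$ and $(A^{2})_{u,u}=\theta_0$, $(A)_{u,u}=0$ we get $(p(A))_{u,u}=\theta_0+\theta_i\theta_{i-1}$, hence $W(p)=\theta_0+\theta_i\theta_{i-1}$. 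Next, $p$ is an upward parabola whose two roots are the \emph{consecutive} eigenvalues $\theta_i<\theta_{i-1}$, so no eigenvalue lies strictly between them, and therefore $p(\theta_l)=(\theta_l-\theta_i)(\theta_l-\theta_{i-1})\ge 0$ for all $l$, with equality exactly at $l\in\{i-1,i\}$. Here $i\ge 2$: from $\theta_0\ge 1>-1\ge\theta_i$ we get $i\ge 1$, and $\theta_1>-1$ because a regular graph with $\theta_1\le -1$ has a single positive eigenvalue and is hence a disjoint union of complete graphs, which would force $r\le 1$. Consequently the zero of $p$ at $\theta_i$ occurs among the eigenvalues $\ne\theta_0$, so $\lambda(p)=\min_{l\ge 1}p(\theta_l)=0$, while $p(\theta_0)=(\theta_0-\theta_i)(\theta_0-\theta_{i-1})>0$. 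Plugging these into Theorem~\ref{thm:hoffman-like} gives exactly
\[
  \alpha_2(\Gamma)\le n\,\frac{\theta_0+\theta_i\theta_{i-1}}{(\theta_0-\theta_i)(\theta_0-\theta_{i-1})}.
\]

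For the optimality claim, write $\Phi(p):=n\bigl(W(p)-\lambda(p)\bigr)/\bigl(p(\theta_0)-\lambda(p)\bigr)$ for the value that Theorem~\ref{thm:hoffman-like} attaches to a polynomial $p$ with $p(\theta_0)>\lambda(p)$. Since $W$ and $\lambda$ are affine-equivariant in $p$, the functional $\Phi$ is invariant under $p\mapsto\mu p+c$ for $\mu>0$ and $c\in\R$; thus it suffices to minimise over monic quadratics, because a valid polynomial of degree at most $1$ only reproduces Hoffman's bound $-n\theta_r/(\theta_0-\theta_r)$ from Theorem~\ref{thm:Hoffman} for $\alpha_1\ge\alpha_2$, which one checks is at least the bound of the first part (using $\theta_r\le -1$). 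Replacing a monic quadratic $p$ by $p-\lambda(p)$ we may assume $\lambda(p)=0$; then $p$ has two real roots, at least one of which is an eigenvalue $\ne\theta_0$, and feasibility ($p\ge 0$ on the eigenvalues) forbids an eigenvalue strictly between the roots. A direct differentiation shows that, with one root held fixed, $\Phi$ is monotone in the other root, the direction being decided by whether the fixed root exceeds $-1$; sliding the free root in the advantageous direction---without letting the root-interval swallow an eigenvalue, which would push $\lambda(p)$ below $0$ and only increase $\Phi$---moves both roots onto two \emph{consecutive} eigenvalues $\theta_j<\theta_{j-1}$ with $j\ge 2$, i.e. $p$ becomes $p_j:=(x-\theta_j)(x-\theta_{j-1})$. (When $\Gamma$ is partially walk-regular, this is exactly the statement that the linear program~\eqref{LPminorpolynomials} with $k=2$ has an optimal vertex with two of the $x_l$ vanishing at consecutive indices.) For $p_j$ the computation of the first part yields $\lambda(p_j)=0$ and $\Phi(p_j)=n(\theta_0+\theta_j\theta_{j-1})/\bigl((\theta_0-\theta_j)(\theta_0-\theta_{j-1})\bigr)$.

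It then remains to minimise $\Phi(p_j)$ over $j\in\{2,\dots,r\}$. Clearing the positive denominators, a short computation gives
\[
  \Phi(p_j)-\Phi(p_{j+1})=n\,\frac{\theta_0\,(\theta_j+1)\,(\theta_{j-1}-\theta_{j+1})}{(\theta_0-\theta_{j-1})(\theta_0-\theta_j)(\theta_0-\theta_{j+1})},
\]
whose sign equals that of $\theta_j+1$, the remaining factors being positive. Hence $j\mapsto\Phi(p_j)$ strictly decreases while $\theta_j>-1$ and is non-decreasing once $\theta_j\le -1$, so it attains its minimum at the unique index $i$ with $\theta_i\le -1<\theta_{i-1}$, which is precisely the bound of the first part. (A double-root polynomial $(x-\theta_j)^{2}$ surviving the reduction contributes the value $n(\theta_0+\theta_j^{2})/(\theta_0-\theta_j)^{2}$, which by the same kind of sign computation is smallest at the eigenvalue nearest $-1$ and is still at least $\Phi(p_i)$.) Together with the degree-$\le 1$ case this proves that $\Phi(p_i)$ is the best bound obtainable from Theorem~\ref{thm:hoffman-like} with a polynomial of degree at most $2$.

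The delicate step, I expect, is the reduction in the second paragraph---making rigorous the claim that an optimal quadratic may be taken with both roots at two consecutive eigenvalues, and correctly disposing of the edge cases (non-real roots, double roots, the extreme index $j=r$, and the jumps of $\lambda(p)$ as a root crosses an eigenvalue). Once the problem is cut down to the finite family $\{p_j\}$, what remains is the elementary sign analysis above together with the one-line comparison with Hoffman's bound.
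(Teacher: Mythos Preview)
The paper does not contain a proof of this theorem: it is quoted verbatim from \cite{ACF2019} and used as a black box (the paper only \emph{applies} it, in Proposition~\ref{prop:eqalpha2}, to the alternating forms graph). So there is nothing in the present paper to compare your argument against.

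That said, your sketch is essentially the argument one finds in \cite{ACF2019}. The derivation of the bound is clean and correct: with $p(x)=(x-\theta_i)(x-\theta_{i-1})$, regularity gives a constant diagonal $(p(A))_{u,u}=\theta_0+\theta_i\theta_{i-1}$, consecutiveness of $\theta_{i-1},\theta_i$ forces $\lambda(p)=0$, and your trace argument that $r\ge 2$ implies $\theta_1>-1$ (hence $i\ge 2$) is right, even if the phrase ``disjoint union of complete graphs'' hides it. Your comparison with degree~$\le 1$ polynomials also goes through: any affine $p$ yields exactly Hoffman's value $-n\theta_r/(\theta_0-\theta_r)$, and one line shows $\Phi(p_r)\le -n\theta_r/(\theta_0-\theta_r)$ iff $\theta_r\le -1$, whence $\Phi(p_i)\le\Phi(p_r)\le$ Hoffman.

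The part you flag yourself---reducing an arbitrary admissible quadratic to one of the $p_j$---is the only place that needs more care. Your monotonicity idea is correct in spirit, but ``sliding the free root'' is not quite a proof: when a root crosses an eigenvalue, $\lambda(p)$ jumps, and you have to argue at that jump rather than by differentiation. A cleaner route is to note that after normalising to $\lambda(p)=0$ the feasible monic quadratics are exactly those with roots in a closed interval $[\theta_a,\theta_b]$ between consecutive eigenvalues (so no eigenvalue lies strictly between the roots); on that compact set $\Phi$ is continuous, hence attains its minimum, and your partial-derivative sign analysis then shows the minimum sits at an endpoint pair $(\theta_j,\theta_{j-1})$. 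Once you are on the finite family $\{p_j\}$, your difference formula
\[
  \Phi(p_j)-\Phi(p_{j+1})=\frac{n\,\theta_0\,(\theta_j+1)(\theta_{j-1}-\theta_{j+1})}{(\theta_0-\theta_{j-1})(\theta_0-\theta_j)(\theta_0-\theta_{j+1})}
\]
and the sign of $\theta_j+1$ finish the job exactly as you say.
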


\begin{theorem}[Ratio-type bound, \cite{KN2022}]\label{thm:hoffman-k=3} Let $\Gamma$ be a regular graph with $n$ vertices and distinct eigenvalues of the adjacency matrix $\theta_0>\theta_1>\dots>\theta_r$ with $r\geq3$. Let $s$ be the largest index such that $\theta_s\geq -\frac{\theta_0^2+\theta_0\theta_r-\Delta}{\theta_0(\theta_r+1)}$, where $\Delta=\max_{u\in V}\{(A^3)_{u,u}\}$. Then
$$\alpha_3(\Gamma)\leq n\frac{\Delta-\theta_0(\theta_s+\theta_{s+1}+\theta_r)-\theta_s\theta_{s+1}\theta_r}{(\theta_0-\theta_s)(\theta_0-\theta_{s+1})(\theta_0-\theta_r)}.$$ Moreover, this is the best possible bound that can be obtained by choosing a polynomial and applying Theorem~\ref{thm:hoffman-like}.
\end{theorem}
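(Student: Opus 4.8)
The plan is to derive the inequality by a single application of the Ratio-type bound (Theorem~\ref{thm:hoffman-like}) with $k=3$ to the cubic
\[
p(x)=(x-\theta_s)(x-\theta_{s+1})(x-\theta_r)\in\R_3[x],
\]
and then to prove optimality by a separate linear-fractional optimization over all degree-$3$ polynomials. The choice of $p$ is the natural continuation of the lower-degree cases: Hoffman's bound uses $p(x)=x$, the $k=2$ bound of Theorem~\ref{thm:hoffman-k=2} uses $p(x)=(x-\theta_i)(x-\theta_{i-1})$ with $\theta_i$ the largest eigenvalue that is $\le-1$, and for $k=3$ one keeps a pair of \emph{consecutive} eigenvalues as roots and spends the one extra degree on the smallest eigenvalue $\theta_r$; this is exactly what keeps $p$ nonnegative on the whole non-principal spectrum while forcing it to vanish on as many eigenvalues as possible.

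First I would compute the two quantities appearing in Theorem~\ref{thm:hoffman-like}. For $\lambda(p)=\min_{j\ge1}p(\theta_j)$: since $\theta_s,\theta_{s+1}$ are consecutive eigenvalues and $\theta_r=\min_j\theta_j$, a sign analysis of the three linear factors shows that $p(\theta_j)\ge0$ for every $j\in\{1,\dots,r\}$, with equality at $j\in\{s,s+1,r\}$, whence $\lambda(p)=0$. For $W(p)=\max_{u}(p(A))_{u,u}$: writing $p(x)=x^3-e_1x^2+e_2x-e_3$ with $e_1,e_2,e_3$ the elementary symmetric polynomials in $\theta_s,\theta_{s+1},\theta_r$, and using that $\Gamma$ is $\delta$-regular and loopless (so $(A)_{u,u}=0$ and $(A^2)_{u,u}=\delta=\theta_0$ for every $u$) together with $(A^3)_{u,u}\le\Delta$ and the positivity of the leading coefficient of $p$, one obtains
\[
W(p)=\Delta-e_1\theta_0-e_3=\Delta-\theta_0(\theta_s+\theta_{s+1}+\theta_r)-\theta_s\theta_{s+1}\theta_r .
\]
Since $\theta_0$ is the largest eigenvalue, $p(\theta_0)=(\theta_0-\theta_s)(\theta_0-\theta_{s+1})(\theta_0-\theta_r)>0$, and substituting into $\alpha_3(\Gamma)\le n\frac{W(p)-\lambda(p)}{p(\theta_0)-\lambda(p)}=n\frac{W(p)}{p(\theta_0)}$ yields precisely the stated bound. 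Note that this part uses neither walk-regularity nor the threshold defining $s$: the inequality holds for every admissible index $s$.

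For the optimality claim I would run the corresponding linear-fractional optimization. The ratio $\frac{W(q)-\lambda(q)}{q(\theta_0)-\lambda(q)}$ is invariant under $q\mapsto aq+b$ with $a>0$, so we may assume $q$ is monic and that $\lambda(q)=0$, i.e.\ $q\ge0$ on $\{\theta_1,\dots,\theta_r\}$. The objective is then a linear-fractional function of the three free coefficients of $q$ over the polyhedron cut out by the inequalities $q(\theta_j)\ge0$, and its minimum is therefore attained at a vertex, where three of these constraints are tight. A monic cubic vanishing at three distinct eigenvalues equals $(x-\theta_a)(x-\theta_b)(x-\theta_c)$, and requiring $q\ge0$ at the remaining eigenvalues forces $\theta_a,\theta_b$ to be consecutive and $\theta_c=\theta_r$; hence an optimal $q$ is of the form $(x-\theta_s)(x-\theta_{s+1})(x-\theta_r)$. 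It then remains to minimize the now fully explicit ratio over the finitely many relevant indices $s$; comparing the values at $s$ and $s+1$ and locating the sign change of their difference produces the threshold $\theta_s\ge-\frac{\theta_0^2+\theta_0\theta_r-\Delta}{\theta_0(\theta_r+1)}$, which picks out the optimal index.

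The bound itself is the routine part: once $p$ is written down, everything reduces to short computations with diagonal entries and traces. I expect the real work to be in the optimality claim --- justifying rigorously that the linear-fractional minimum is attained at a vertex of the constraint polyhedron (which needs the polyhedron to be pointed and the sign of the leading coefficient controlled), handling the degenerate vertices with a repeated root, and then carrying out the monotonicity analysis in $s$ that pins down the threshold. If one additionally assumes $\Gamma$ to be $3$-partially walk-regular, this last step can instead be recast as identifying the minor polynomial, i.e.\ the optimal solution of the linear program in Eq.~\eqref{LPminorpolynomials} for $k=3$, by producing a matching pair of feasible primal and dual solutions.
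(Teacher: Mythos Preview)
The paper does not contain a proof of this statement: Theorem~\ref{thm:hoffman-k=3} is quoted from~\cite{KN2022} as a tool and is used later (in Proposition~\ref{propo:indexscondition} and Theorem~\ref{thm:distance8SB}) without being re-derived. There is therefore no ``paper's own proof'' to compare your proposal against.

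That said, your derivation of the inequality is correct and is precisely the intended one: apply Theorem~\ref{thm:hoffman-like} with the monic cubic $p(x)=(x-\theta_s)(x-\theta_{s+1})(x-\theta_r)$, observe that $p$ is nonnegative on $\{\theta_1,\dots,\theta_r\}$ (consecutive roots plus the smallest eigenvalue), so $\lambda(p)=0$, and compute $W(p)$ using $(A)_{u,u}=0$, $(A^2)_{u,u}=\theta_0$ for a loopless regular graph. You are also right that this part does not use the specific threshold for $s$; any such $s$ gives a valid bound.

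Your sketch of the optimality argument is the right shape --- normalize to monic with $\lambda(q)=0$, reduce to vertices of the feasibility polytope, and then optimize over the discrete index --- but, as you already flag, the nontrivial content lies in justifying the vertex reduction (boundedness/pointedness, excluding repeated-root degenerations, and controlling the sign of the leading coefficient after normalization) and in the monotonicity-in-$s$ computation that produces the threshold $-\frac{\theta_0^2+\theta_0\theta_r-\Delta}{\theta_0(\theta_r+1)}$. These are exactly the steps carried out in~\cite{KN2022}; since the present paper only quotes the result, filling them in here goes beyond what the paper itself does.
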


By Lemma \ref{lemma:equivalence}, we have that $\alpha_{d-1}(\Gamma(\Alt_n(\F_q)))=A_q(n,2d)$. The rest of the section is devoted to compare the Ratio-type bounds for $\alpha$, $\alpha_2$ and $\alpha_3$ with the Singleton-like bound in Eq.~\eqref{eq:SingletonBound} for minimum distance value $2d$, with $d=2,3,4$, respectively.

\begin{proposition}\label{prop:Hoff=Singleton}
    Let $n\geq 4$ and consider $\theta_0> \theta_1 >\cdots >\theta_r$ to be the distinct adjacency eigenvalues of $\Gamma(\Alt_n(\F_q))$. Then the Hoffman bound from Theorem \ref{thm:Hoffman} and the Singleton-like bound in Eq.~\eqref{eq:SingletonBound} for minimum rank distance value equal to $4$ are equivalent.
\end{proposition}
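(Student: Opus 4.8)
The plan is to compute both sides of the claimed equivalence explicitly and verify they coincide. The Singleton-like bound of Eq.~\eqref{eq:SingletonBound} for minimum rank distance $4$ corresponds to $d=2$, so the right-hand side is $q^{\frac{n(n-1)}{2\lfloor n/2\rfloor}(\lfloor n/2\rfloor-1)}$; I would record this as $q^{n(n-1)/2 - n(n-1)/(2\lfloor n/2\rfloor)}$, noting that the exponent is $\binom{n}{2}-n+1$ for $n$ odd and $\binom{n}{2}-n$ for $n$ even, since $\frac{n(n-1)}{2\lfloor n/2\rfloor}$ equals $n-1$ or $n$ respectively. The left-hand side is Hoffman's bound $|V|\cdot\frac{-\lambda_n}{\lambda_1-\lambda_n}$ applied to $\Gamma(\Alt_n(\F_q))$, where $|V|=q^{\binom{n}{2}}$, the top eigenvalue is $\lambda_1=\delta=P^{(n)}(0)=\frac{(q^n-1)(q^{n-1}-1)}{q^2-1}$, and the least eigenvalue is $\lambda_n=P^{(n)}(\lfloor n/2\rfloor)$ by Lemma~\ref{lem:order_evs} (which tells us $P^{(n)}$ is decreasing, so the smallest eigenvalue occurs at $x=\lfloor n/2\rfloor$, and it is $<-1$).

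Next I would substitute the closed forms from the list after Lemma~\ref{graphevs}, splitting into the two parities. For $n$ even, $\lambda_n=\frac{1-q^n}{q^2-1}$ and $\lambda_1=\frac{(q^n-1)(q^{n-1}-1)}{q^2-1}$, so
$$-\lambda_n=\frac{q^n-1}{q^2-1},\qquad \lambda_1-\lambda_n=\frac{(q^n-1)(q^{n-1}-1)+(q^n-1)}{q^2-1}=\frac{(q^n-1)q^{n-1}}{q^2-1}.$$
Hence $\frac{-\lambda_n}{\lambda_1-\lambda_n}=\frac{1}{q^{n-1}}$, and Hoffman's bound is $q^{\binom{n}{2}}/q^{n-1}=q^{\binom{n}{2}-(n-1)}$; but $\binom{n}{2}-(n-1)=\binom{n}{2}-n$ is exactly the Singleton exponent for $n$ even, wait — I should double-check: the Singleton exponent for $n$ even is $\binom{n}{2}-n$, and $\binom{n}{2}-(n-1)=\binom{n}{2}-n+1$, so I expect instead to get $\frac{-\lambda_n}{\lambda_1-\lambda_n}=q^{-n}$; the computation must be redone carefully, noting that $\lambda_1-\lambda_n$ simplifies differently. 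For $n$ odd, $\lambda_n=\frac{1-q^{n-1}}{q^2-1}$, so $-\lambda_n=\frac{q^{n-1}-1}{q^2-1}$ and $\lambda_1-\lambda_n=\frac{(q^n-1)(q^{n-1}-1)+(q^{n-1}-1)}{q^2-1}=\frac{(q^{n-1}-1)q^n}{q^2-1}$, giving ratio $q^{-n}$ and Hoffman bound $q^{\binom{n}{2}-n}=q^{\binom{n}{2}-(n-1)}$... again I must reconcile with the Singleton exponent $\binom{n}{2}-(n-1)$ for $n$ odd. The point of this step is purely algebraic: carefully factor $q^n-1$ (resp.\ $q^{n-1}-1$) out of the denominator $\lambda_1-\lambda_n$, observe the telescoping $+1$, and read off the power of $q$ in the ratio.

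The main obstacle is therefore not conceptual but bookkeeping: getting the factorization of $\lambda_1-\lambda_n$ exactly right so that it reduces to a single power of $q$ times a unit, and matching parities correctly against $\frac{n(n-1)}{2\lfloor n/2\rfloor}\in\{n-1,n\}$. Once the ratio is seen to be $q^{-(n-1)}$ in one parity and $q^{-n}$ in the other --- i.e.\ exactly $q^{-n(n-1)/(2\lfloor n/2\rfloor)}$ in both cases --- multiplying by $|V|=q^{n(n-1)/2}$ yields precisely $q^{\frac{n(n-1)}{2}-\frac{n(n-1)}{2\lfloor n/2\rfloor}}=q^{\frac{n(n-1)}{2\lfloor n/2\rfloor}(\lfloor n/2\rfloor-1)}$, which is the Singleton-like bound in Eq.~\eqref{eq:SingletonBound} with $d=2$. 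It remains to note that for $n\geq 4$ the graph has diameter $\lfloor n/2\rfloor\geq 2$ so that $\lambda_n$ is genuinely distinct from $\lambda_1$ and Hoffman's bound (Theorem~\ref{thm:Hoffman}) legitimately applies; this is exactly the hypothesis $n\geq 4$ in the statement.
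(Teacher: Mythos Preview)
Your approach is exactly the paper's: identify $\theta_0=\delta$ and $\theta_r=P^{(n)}(\lfloor n/2\rfloor)$ via Lemma~\ref{lem:order_evs}, plug into Hoffman, and simplify. Your Hoffman computations are in fact \emph{correct}: for $n$ even the ratio is $q^{-(n-1)}$, for $n$ odd it is $q^{-n}$, and in both cases this is $q^{-n(n-1)/(2\lfloor n/2\rfloor)}$, giving the Singleton-like bound after multiplying by $q^{\binom{n}{2}}$.

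The only issue is a self-inflicted parity mix-up in your first paragraph: you write that $\frac{n(n-1)}{2\lfloor n/2\rfloor}$ equals $n-1$ for $n$ odd and $n$ for $n$ even, but it is the other way around (for $n$ even, $2\lfloor n/2\rfloor=n$ gives $n-1$; for $n$ odd, $2\lfloor n/2\rfloor=n-1$ gives $n$). This reversed labelling is what makes you doubt your own (correct) ratio computations in the second paragraph. Fix that one sentence and the whole argument goes through cleanly; alternatively, avoid the case split entirely as the paper does by writing $\theta_r=-\frac{q^{2\lfloor n/2\rfloor}-1}{q^2-1}$ uniformly.
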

\begin{proof}
    By Lemma \ref{lem:order_evs} we have that $\theta_0=\delta$ and $\theta_r=P^{(n)}\left(\left\lfloor\frac{n}{2}\right\rfloor\right)= -\frac{q^{2\left\lfloor\frac{n}{2}\right\rfloor}-1}{q^2-1}$.
    Plugging these into the Hoffman bound, we have that
    \begin{equation*}
        \alpha(\Gamma(\Alt_n(\F_q)))=q^{\frac{n(n-1)}{2}}\frac{-\theta_r}{\theta_0-\theta_r} = q^{\frac{n(n-1)}{2\left\lfloor\frac{n}{2}\right\rfloor}\left(\left\lfloor\frac{n}{2}\right\rfloor-1\right)},
    \end{equation*}
    which is equal to the Singleton-like bound in Eq.~\eqref{eq:SingletonBound} for minimum rank distance $4$.
\end{proof}

\begin{proposition}\label{prop:eqalpha2}
    Let $n\geq 6$ and let   $\theta_0>\theta_1>\dots>\theta_r$ be the eigenvalues of the adjacency matrix of $\Gamma(\Alt_n(\F_q))$. Let $\theta_i$ be the largest eigenvalue such that $\theta_i\leq -1$. Then bound from Theorem \ref{thm:hoffman-k=2} and the Singleton-like bound in Eq.~\eqref{eq:SingletonBound} for minimum rank distance $6$ are equivalent.
\end{proposition}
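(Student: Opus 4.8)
The plan is to mirror the proof of Proposition~\ref{prop:Hoff=Singleton}, replacing Hoffman's bound by the sharp $\alpha_2$ bound of Theorem~\ref{thm:hoffman-k=2}. By Lemma~\ref{lemma:equivalence}, $A_q(n,6)=\alpha_2\big(\Gamma(\Alt_n(\F_q))\big)$, so it suffices to evaluate the right-hand side of Theorem~\ref{thm:hoffman-k=2} on $\Gamma(\Alt_n(\F_q))$ and show it equals $q^{\frac{n(n-1)}{2\lfloor n/2\rfloor}(\lfloor n/2\rfloor-2)}$. First I would record the data needed to apply that theorem: the graph is distance-regular of diameter $\lfloor n/2\rfloor$ by Theorem~\ref{thm:drg}, so it has exactly $\lfloor n/2\rfloor+1$ distinct eigenvalues; by Lemma~\ref{graphevs} these are $\theta_j=P^{(n)}(j)$ for $j=0,\dots,\lfloor n/2\rfloor$, pairwise distinct because $P^{(n)}$ is strictly decreasing (Lemma~\ref{lem:order_evs}). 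Thus $r=\lfloor n/2\rfloor$, and since $n\ge 6$ we have $r\ge 3\ge 2$, so the hypothesis $r\ge 2$ of Theorem~\ref{thm:hoffman-k=2} holds; moreover the number of vertices is $|\Alt_n(\F_q)|=q^{n(n-1)/2}$.

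Next I would identify the eigenvalue $\theta_i$ in the statement of Theorem~\ref{thm:hoffman-k=2}, namely the largest one that is $\le -1$. By Lemma~\ref{lem:order_evs}, $P^{(n)}(j)>0$ for every $j\le\lfloor n/2\rfloor-1$ while $P^{(n)}(\lfloor n/2\rfloor)<-1$, so $\theta_i=\theta_r=P^{(n)}(\lfloor n/2\rfloor)$ and $\theta_{i-1}=\theta_{r-1}=P^{(n)}(\lfloor n/2\rfloor-1)$. Hence Theorem~\ref{thm:hoffman-k=2} gives
$$\alpha_2\big(\Gamma(\Alt_n(\F_q))\big)\le q^{\frac{n(n-1)}{2}}\cdot\frac{\theta_0+\theta_r\theta_{r-1}}{(\theta_0-\theta_r)(\theta_0-\theta_{r-1})},$$
with $\theta_0=\delta$.

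What remains is an explicit computation, carried out separately for $n$ even and $n$ odd using the closed forms for $P^{(n)}(\lfloor n/2\rfloor)$ and $P^{(n)}(\lfloor n/2\rfloor-1)$ listed right after Lemma~\ref{graphevs}. For $n$ even I expect $\theta_0-\theta_r=\frac{q^{n-1}(q^n-1)}{q^2-1}$, $\theta_0-\theta_{r-1}=\frac{q^{n+1}(q^{n-2}-1)}{q^2-1}$, and, after clearing denominators, $\theta_0+\theta_r\theta_{r-1}=\frac{q^2(q^n-1)(q^{n-2}-1)}{(q^2-1)^2}$, so the fraction collapses to $q^{2-2n}$ and the bound becomes $q^{\frac{n(n-1)}{2}-2n+2}=q^{\frac{(n-1)(n-4)}{2}}$, which is exactly $q^{\frac{n(n-1)}{n}(\frac n2-2)}$. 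For $n$ odd the analogous identities $\theta_0-\theta_r=\frac{q^{n}(q^{n-1}-1)}{q^2-1}$, $\theta_0-\theta_{r-1}=\frac{q^{n+2}(q^{n-3}-1)}{q^2-1}$, $\theta_0+\theta_r\theta_{r-1}=\frac{q^2(q^{n-1}-1)(q^{n-3}-1)}{(q^2-1)^2}$ make the fraction collapse to $q^{-2n}$, giving $q^{\frac{n(n-5)}{2}}=q^{n(\frac{n-1}{2}-2)}$, again the Singleton-like bound. The only genuinely delicate point is the simplification of $\theta_0+\theta_r\theta_{r-1}$: after multiplying out, a long expression in powers of $q$ has to be recognized as $q^2(q^n-1)(q^{n-2}-1)$ (resp. $q^2(q^{n-1}-1)(q^{n-3}-1)$), and one must keep the two parities of the floor-function exponents carefully separated. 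Everything else is bookkeeping.
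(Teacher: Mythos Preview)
Your proposal is correct and follows exactly the same route as the paper's proof: identify $\theta_i=P^{(n)}(\lfloor n/2\rfloor)$ and $\theta_{i-1}=P^{(n)}(\lfloor n/2\rfloor-1)$ via Lemma~\ref{lem:order_evs}, then plug into Theorem~\ref{thm:hoffman-k=2} and simplify. In fact you give more detail than the paper, which dismisses the computation as ``tedious but straightforward''; your explicit factorizations of $\theta_0-\theta_r$, $\theta_0-\theta_{r-1}$, and $\theta_0+\theta_r\theta_{r-1}$ in both parities are all correct.
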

\begin{proof}
    By Lemma \ref{lem:order_evs} we have that $\theta_0=\delta$, $\theta_i=P^{(n)}(\left\lfloor\frac{n}{2}\right\rfloor)$ and $\theta_{i-1}=P^{(n)}(\left\lfloor\frac{n}{2}\right\rfloor-1)$. 
    Plugging these into the bound from Theorem \ref{thm:hoffman-k=2}, after tedious but straightforward computations, we get that 
    $$\alpha_2(\Gamma(\Alt_n(\F_q)))=q^{\frac{n(n-1)}{2}}\frac{\theta_0+\theta_i\theta_{i-1}}{(\theta_0-\theta_i)(\theta_0-\theta_{i-1})} = q^{\frac{n(n-1)}{2\left\lfloor\frac{n}{2}\right\rfloor}\left(\left\lfloor\frac{n}{2}\right\rfloor-2\right)},$$
    which gives precisely the Singleton-like bound for $d=3$.
\end{proof}

Let $A$ be the adjacency matrix of $\Gamma(\Alt_n(\F_q))$. In order to compute the quantity $\Delta=\max_{u\in V}\{(A^3)_{u,u}\}$ required to apply Theorem \ref{thm:hoffman-k=3}, we need the following folklore result.

\begin{lemma}
  Let $A$ be the adjacency matrix of a graph $\Gamma$. The $(i,j)$-th entry of $A^k$ counts the number of walks of length $k$ having start and end vertices $i$ and $j$ respectively. 
\end{lemma}

Since $\Gamma(\Alt_n(\F_q))$ is distance-regular, it is also walk-regular, meaning that all the entries on the main diagonal of $A$ are the same. Hence, it is sufficient to compute $(A^k)_{1,1}$, i.e. compute the number of walks of length $k$ having start and end vertex at the matrix $O\in\Alt_n(\F_q)$.

Hence we get immediately the following result.
\begin{lemma}\label{lem:Delta3}
    Let $A$ be the adjacency matrix of $\Gamma(\Alt_n(\F_q))$. Then 
    $$\Delta = \delta a_1=\frac{(q^n-1)(q^{n-1}-1)(q^{n+2}+q^{n+1}-q^n-q^{n-1}-q^4-q^2+2)}{(q^2-1)^2},$$
    where $a_1$ is computed in Theorem \ref{thm:drg}.
\end{lemma}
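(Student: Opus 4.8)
The plan is to compute $(A^3)_{1,1}$ by counting closed walks of length $3$ based at the zero matrix $O \in \Alt_n(\F_q)$, using the distance-regularity of $\Gamma(\Alt_n(\F_q))$ recorded in Theorem~\ref{thm:drg}. A closed walk $O \to B \to B' \to O$ of length $3$ consists of choosing a neighbour $B$ of $O$, then a neighbour $B'$ of $B$ that is also a neighbour of $O$, and then returning to $O$. The number of choices of $B$ is $\delta$, the degree. For each such $B$ (which has rank $2$, so it sits at geodesic distance $1$ from $O$), the number of common neighbours of $O$ and $B$ is exactly $a_1$, by the linear-algebra interpretation of the intersection numbers given right after Theorem~\ref{thm:drg} (with $i=1$): $a_1 = |\{B' \in \Alt_n(\F_q) : \rk(B') = 2,\ \rk(B - B') = 2\}|$, which is precisely the number of vertices adjacent to both $O$ and $B$. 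The last step of the walk, from $B'$ back to $O$, is forced since $B'$ is already required to be a neighbour of $O$. Hence $(A^3)_{1,1} = \delta \cdot a_1$.

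Next I would invoke walk-regularity: since $\Gamma(\Alt_n(\F_q))$ is distance-regular (Theorem~\ref{thm:drg}) it is walk-regular, so $(A^3)_{u,u}$ is independent of $u$, and therefore $\Delta = \max_{u \in V}\{(A^3)_{u,u}\} = (A^3)_{1,1} = \delta a_1$. Alternatively, translation invariance of the rank metric (already used in the proof of Proposition~\ref{graphdegree}) shows directly that the diagonal entries of $A^3$ all equal the value at $O$, so no appeal to general walk-regularity theory is even needed.

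It remains to substitute the explicit values $\delta = \frac{(q^n-1)(q^{n-1}-1)}{q^2-1}$ and $a_1 = \delta - b_1 - c_1$, where from Theorem~\ref{thm:drg} with $i=1$ we have $b_1 = \frac{q^4(q^{n-2}-1)(q^{n-3}-1)}{q^2-1}$ and $c_1 = \frac{q^0(q^2-1)}{q^2-1} = 1$. So $a_1 = \frac{(q^n-1)(q^{n-1}-1) - q^4(q^{n-2}-1)(q^{n-3}-1)}{q^2-1} - 1$. Expanding the numerator, $(q^n-1)(q^{n-1}-1) = q^{2n-1} - q^n - q^{n-1} + 1$ and $q^4(q^{n-2}-1)(q^{n-3}-1) = q^4(q^{2n-5} - q^{n-2} - q^{n-3} + 1) = q^{2n-1} - q^{n+2} - q^{n+1} + q^4$, so their difference is $-q^n - q^{n-1} + 1 + q^{n+2} + q^{n+1} - q^4 = q^{n+2} + q^{n+1} - q^n - q^{n-1} - q^4 + 1$. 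Dividing by $q^2-1$ and subtracting $1 = \frac{q^2-1}{q^2-1}$ gives $a_1 = \frac{q^{n+2} + q^{n+1} - q^n - q^{n-1} - q^4 - q^2 + 2}{q^2-1}$. Multiplying by $\delta$ yields the claimed closed form for $\Delta$, with the $(q^2-1)^2$ in the denominator.

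The proof is essentially a bookkeeping exercise, so there is no serious obstacle; the only point requiring mild care is the identification of $(A^3)_{1,1}$ with $\delta a_1$ — one must be sure that in the walk $O \to B \to B' \to O$ the vertex $B'$ ranges over common neighbours of $O$ and $B$ (not merely over neighbours of $B$), and that the closing edge $B' \to O$ contributes no additional multiplicity. This is exactly the content of the combinatorial reading of $a_i$ stated after Theorem~\ref{thm:drg}. After that, the algebraic simplification is routine, and I would just present the final expansion as above.
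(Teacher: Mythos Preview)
Your proof is correct and follows exactly the approach the paper takes: the paper notes that distance-regularity implies walk-regularity, so it suffices to count closed walks of length $3$ at $O$, and then states the lemma as an immediate consequence without spelling out the $\delta\cdot a_1$ count or the algebraic simplification. You have simply filled in the details the paper leaves implicit.
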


Let  $\delta=\theta_0>\theta_1>\dots>\theta_{\lfloor\frac{n}{2}\rfloor}$ be the eigenvalues of the adjacency matrix of $\Gamma(\Alt_n(\F_q))$. After some manipulations, we obtain
$$-\frac{\delta^2+\delta\theta_{\lfloor\frac{n}{2}\rfloor}-\Delta}{\delta(\theta_{\lfloor\frac{n}{2}\rfloor}+1)} = \frac{q^2(q^{n-2}-1)(q^{n-3}-1) - q^{2\lfloor\frac{n}{2}\rfloor-2}+1}{q^{2\lfloor\frac{n}{2}\rfloor-2}-1}=:t_n.$$

\begin{proposition}\label{propo:indexscondition}
      Let $n\geq 6$ and $\delta=\theta_0>\theta_1>\dots>\theta_{\lfloor\frac{n}{2}\rfloor}$ be the eigenvalues of the adjacency matrix of $\Gamma(\Alt_n(\F_q))$. Then the largest index $s\in\{0,\ldots,\left\lfloor\frac{n}{2}\right\rfloor\}$ such that $\theta_s\geq t_n$ is $\left\lfloor\frac{n}{2}\right\rfloor-2$.
\end{proposition}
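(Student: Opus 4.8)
The plan is to reduce the statement to two inequalities between $t_n$ and two consecutive eigenvalues, and then settle those by a direct computation. By Lemmas~\ref{graphevs} and~\ref{lem:order_evs}, the graph $\Gamma(\Alt_n(\F_q))$ has exactly $\lfloor\frac{n}{2}\rfloor+1$ distinct eigenvalues, namely $\theta_j=P^{(n)}(j)$ for $j=0,\dots,\lfloor\frac{n}{2}\rfloor$, and $\theta_0>\theta_1>\cdots>\theta_{\lfloor n/2\rfloor}$ is strictly decreasing. Hence the largest index $s$ with $\theta_s\geq t_n$ equals $\lfloor\frac{n}{2}\rfloor-2$ if and only if
\[
\theta_{\lfloor n/2\rfloor-2}\geq t_n\qquad\text{and}\qquad\theta_{\lfloor n/2\rfloor-1}<t_n,
\]
the only indices above $\lfloor\frac{n}{2}\rfloor-2$ being $\lfloor\frac{n}{2}\rfloor-1$ and $\lfloor\frac{n}{2}\rfloor$, and $\theta_{\lfloor n/2\rfloor}<\theta_{\lfloor n/2\rfloor-1}$. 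So everything reduces to these two inequalities.

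To prove them I would substitute the closed forms: for $\theta_{\lfloor n/2\rfloor-1}=P^{(n)}(\lfloor\frac{n}{2}\rfloor-1)$ and $\theta_{\lfloor n/2\rfloor-2}=P^{(n)}(\lfloor\frac{n}{2}\rfloor-2)$ the expressions displayed between Lemmas~\ref{graphevs} and~\ref{lem:order_evs} (one set for $n$ even and one for $n$ odd), and for $t_n$ the form
\[
t_n=\frac{q^{2n-3}-q^n-q^{n-1}+q^2-q^{2\lfloor n/2\rfloor-2}+1}{q^{2\lfloor n/2\rfloor-2}-1},
\]
coming from the expansion $q^2(q^{n-2}-1)(q^{n-3}-1)=q^{2n-3}-q^n-q^{n-1}+q^2$. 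Since $q^2-1>0$ and $q^{2\lfloor n/2\rfloor-2}-1>0$, each inequality clears of denominators into a polynomial inequality in $q$; writing $n=2\lfloor\frac{n}{2}\rfloor+\epsilon$ with $\epsilon\in\{0,1\}$ lets the two parities run in parallel. After cross-multiplying and cancelling common terms, $\theta_{\lfloor n/2\rfloor-1}<t_n$ becomes $q^{2n-2-2\epsilon}-q^{n+2-\epsilon}-q^{n-\epsilon}+q^4>0$, and $\theta_{\lfloor n/2\rfloor-2}\geq t_n$ becomes an inequality in which a $q^{2n-3}$-term cancels between the two sides, leaving $q^{2n+1}$ minus four strictly smaller powers plus three further positive powers. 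In each case the leading monomial exceeds the sum of the negative ones for every prime power $q\geq2$ once $n\geq6$ (which forces $n\geq7$ in the odd case): this rests only on the elementary exponent comparisons $2n-2-2\epsilon\geq n+4-\epsilon$ and $n+3+\epsilon\leq 2n-1$, together with $q^2\geq4$. Combined with the monotonicity of the $\theta_j$, this yields the claim.

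The only laborious point is the bookkeeping in the two cross-multiplications — organizing all the monomials, running the two parity cases, and confirming the exact cancellation of the $q^{2n-3}$-term — but there is no conceptual obstacle, and nothing beyond the eigenvalue data already collected in this section is needed.
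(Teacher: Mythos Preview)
Your proposal is correct and follows essentially the same approach as the paper: reduce to the two inequalities $\theta_{\lfloor n/2\rfloor-2}\geq t_n>\theta_{\lfloor n/2\rfloor-1}$, plug in the explicit eigenvalue formulas from Lemma~\ref{graphevs}, and verify the resulting polynomial inequalities in $q$ separately for $n$ even and $n$ odd. The only notable difference is that the paper first simplifies $t_n$ to the closed form $q^{n-1}-q^2-1$ (for $n$ even), which makes the subsequent differences $\theta_{\lfloor n/2\rfloor-2}-t_n$ and $t_n-\theta_{\lfloor n/2\rfloor-1}$ factor very cleanly, whereas you cross-multiply directly without this simplification; either way the verification is routine.
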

\begin{proof}
    In order to prove the statement, we are going to show that 
    $$P^{(n)}\left(\left\lfloor\frac{n}{2}\right\rfloor-2\right) \geq t_n> P^{(n)}\left(\left\lfloor\frac{n}{2}\right\rfloor-1\right).$$
    Assume $n$ is even. Similar computations can be done when $n$ is odd.
    Recall that
    \begin{align*}
        P^{(n)}\left(\frac{n}{2}-2\right) &=\frac{q^{n+3}-q^n-q^{n-1}+1}{q^2-1}, \\
        P^{(n)}\left(\frac{n}{2}-1\right) &=\frac{q^{n+1}-q^n-q^{n-1}+1}{q^2-1}.
    \end{align*}
    Moreover $t_n = q^{n-1}-q^2-1$.
    Now let 
    \begin{align}
        P^{(n)}\left(\frac{n}{2}-2\right) - t_n &= \frac{q^{n+3}-q^n-q^{n-1}+1}{q^2-1} - q^{n-1}+q^2+1 \nonumber \\ 
        &=\frac{q^{n+3}-q^n -q^{n+1} + q^4}{q^2-1}. \label{eq:quantity}
    \end{align}
    The quantity in Eq.~\eqref{eq:quantity} is non-negative if and only if the numerator is non-negative. Hence
    \begin{align*}
        q^{n+3}-q^n -q^{n+1} + q^4 &\geq 0 \Leftrightarrow q^{n+1}\left(q^2 - \frac{1}{q} - 1\right) + q^4 \geq 0,
    \end{align*}
    which is always true.
    Hence, $P^{(n)}\left(\frac{n}{2}-2\right) \geq t_n$.
    Finally, let 
    \begin{align*}
        t_n - P^{(n)}\left(\frac{n}{2}-1\right) 
        &= \frac{q^n-q^4}{q^2-1},
    \end{align*}
    which is positive whenever $n\geq 6$.
    \end{proof}

The next theorem shows that the bound on $\alpha_3$ from Theorem \ref{thm:hoffman-k=3} equals the Singleton-like bound for minimum distance $8$. 

\begin{theorem}\label{thm:distance8SB}
    Let $n\geq 8$ and $s=\left\lfloor\frac{n}{2}\right\rfloor -2$. Let  $\delta=\theta_0>\theta_1>\dots>\theta_{\lfloor\frac{n}{2}\rfloor}$ be the distinct eigenvalues of the adjacency matrix of $\Gamma(\Alt_n(\F_q))$. Then the Ratio-type bound from Theorem \ref{thm:hoffman-k=3} and  and the Singleton-like bound in Eq.~\eqref{eq:SingletonBound} for minimum rank distance equal to $8$ are equivalent.
    \begin{proof}
        We prove the statement when $n$ is even. Similar computations can be done when $n$ is odd.
       We have the following identities.
        \small{\begin{align*}
            \theta_s+\theta_{s+1}+\theta_{\frac{n}{2}} &= P^{(n)}\left(\frac{n}{2}-2\right) +P^{(n)}\left(\frac{n}{2}-1\right) +P^{(n)}\left(\frac{n}{2}\right) \\
            &=\frac{1}{(q^2-1)^2}\Biggl(q^{3n-2}+q^{3n} -3q^{3n-1}-2q^{3n-2} -q^{2n+3} -q^{2n+2}-q^{2n+1}+2q^{2n}\\
            &\;\;+ 8q^{2n-1} + 2q^{2n-2}+q^{n+3} + q^{n+1} -6q^n-5q^{n-1} +3\Biggr).
        \end{align*}}
       \small{ \begin{align*}
        \theta_s\theta_{s+1}\theta_{\frac{n}{2}}=&\frac{1}{(q^2-1)^3}\Biggl(-q^{3n+4}+q^{3n+3}+q^{3n+2}+q^{3n+1} -2q^{3n-1} - q^{3n-2} +q^{2n+4}-2q^{2n+3}\\
        &-q^{2n+2}-2q^{2n+1}+2q^{2n}+4q^{2n-1} +q^{2n-2}+q^{n+3}+q^{n+1}-3q^n-2q^{n-1}+1\Biggr).
        \end{align*}}
        \small{\begin{align*}
            (\theta_0-\theta_s)(\theta_0-\theta_{s+1})\left(\theta_0-\theta_{\frac{n}{2}}\right) &=\frac{(q^{2n-1}-q^{n+3})(q^{2n-1}-q^{n+1})(q^{2n-1}-q^{n-1})}{(q^2-1)^3}.
        \end{align*}}
        Using these quantities inside the bound from Theorem \ref{thm:hoffman-k=3}, we get
    $$\alpha_3(\Alt_n(\F_q)))=q^{\frac{n(n-1)}{2}}\frac{\Delta-\theta_0\left(\theta_s+\theta_{s+1}+\theta_{\left\lfloor\frac{n}{2}\right\rfloor}\right)-\theta_s\theta_{s+1}\theta_{\left\lfloor\frac{n}{2}\right\rfloor}}{(\theta_0-\theta_s)(\theta_0-\theta_{s+1})\left(\theta_0-\theta_{\left\lfloor\frac{n}{2}\right\rfloor}\right)} = q^{\frac{n(n-1)}{2\left\lfloor \frac{n}{2}\right\rfloor}\left(\left\lfloor\frac{n}{2}\right\rfloor -3\right)},$$
    where $\Delta$ is computed in Lemma \ref{lem:Delta3}.
    \end{proof}
\end{theorem}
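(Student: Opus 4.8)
The plan is to specialize Theorem~\ref{thm:hoffman-k=3} to $\Gamma(\Alt_n(\F_q))$ using the concrete data already assembled in the excerpt: the eigenvalue formula $P^{(n)}(x)$ from Lemma~\ref{graphevs}, the value of $\Delta$ from Lemma~\ref{lem:Delta3}, and the identification of the optimal index $s = \lfloor n/2\rfloor - 2$ from Proposition~\ref{propo:indexscondition}. By Lemma~\ref{lemma:equivalence} we have $\alpha_3(\Gamma(\Alt_n(\F_q))) = A_q(n, 8)$, so the task reduces to a pure algebraic identity: show that plugging $\theta_0 = \delta$, $\theta_s = P^{(n)}(\lfloor n/2\rfloor - 2)$, $\theta_{s+1} = P^{(n)}(\lfloor n/2\rfloor - 1)$, $\theta_r = \theta_{\lfloor n/2\rfloor} = P^{(n)}(\lfloor n/2\rfloor)$, and the value of $\Delta$ into the right-hand side of Theorem~\ref{thm:hoffman-k=3} yields exactly $q^{\frac{n(n-1)}{2\lfloor n/2\rfloor}(\lfloor n/2\rfloor - 3)}$ (times the vertex count $q^{n(n-1)/2}$, which must cancel against the denominator down to this power).

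First I would split into the two parity cases $n$ even and $n$ odd, and treat the even case in detail (the odd case being completely analogous with $\lfloor n/2\rfloor$ replaced by $(n-1)/2$ and the corresponding shifts in the exponents of $q$ in the $P^{(n)}$ values). For $n$ even, I would substitute the three explicit eigenvalue expressions listed just before Lemma~\ref{lem:order_evs} — namely $P^{(n)}(n/2) = (1-q^n)/(q^2-1)$, $P^{(n)}(n/2-1) = (q^{n+1}-q^n-q^{n-1}+1)/(q^2-1)$, $P^{(n)}(n/2-2) = (q^{n+3}-q^n-q^{n-1}+1)/(q^2-1)$ — and clear the common denominator $(q^2-1)^3$ in numerator and denominator of the bound. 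This produces the three auxiliary identities displayed in the statement: the sum $\theta_s + \theta_{s+1} + \theta_{n/2}$, the product $\theta_s\theta_{s+1}\theta_{n/2}$, and the denominator product $(\theta_0 - \theta_s)(\theta_0 - \theta_{s+1})(\theta_0 - \theta_{n/2})$; I would verify each by direct expansion. Combined with the formula for $\Delta$ from Lemma~\ref{lem:Delta3} (also over common denominator $(q^2-1)^2$), the numerator $\Delta - \theta_0(\theta_s+\theta_{s+1}+\theta_r) - \theta_s\theta_{s+1}\theta_r$ becomes a single rational function in $q$.

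The key simplification to aim for is that the denominator product factors cleanly as $(q^{2n-1} - q^{n+3})(q^{2n-1} - q^{n+1})(q^{2n-1} - q^{n-1})/(q^2-1)^3$, i.e. as $q^{3n-3}(q^{n-4}-1)(q^{n-2}-1)(q^{n}-1)\cdot q^{?}/(q^2-1)^3$ after pulling out powers of $q$ — and the numerator should factor so that, after multiplying by the vertex count $q^{n(n-1)/2}$, everything collapses to a single power of $q$. Concretely I would show the quotient equals $q^{\frac{n(n-1)}{2} - \frac{n(n-1)}{2\lfloor n/2\rfloor}\cdot 3} = q^{\frac{n(n-1)}{2\lfloor n/2\rfloor}(\lfloor n/2\rfloor - 3)}$, which for $n$ even is $q^{(n-1)(n/2 - 3)}$. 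The main obstacle is purely computational: the numerator $\Delta - \theta_0(\cdots) - \theta_s\theta_{s+1}\theta_r$ is a degree-$(3n+\text{const})$ polynomial in $q$ with many terms, and one must verify that it factors as $q^{\text{const}}(q^{n-4}-1)(q^{n-2}-1)(q^n-1)$ up to the expected power — the cancellation is delicate and there is no conceptual shortcut, only careful bookkeeping (ideally checked with a computer algebra system). One should also remark that $n \geq 8$ is needed so that $r = \lfloor n/2\rfloor \geq 3$ and $s = \lfloor n/2\rfloor - 2 \geq 1$, ensuring Theorem~\ref{thm:hoffman-k=3} and Proposition~\ref{propo:indexscondition} both apply.
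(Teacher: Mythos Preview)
Your proposal is correct and follows essentially the same route as the paper: split by parity of $n$, substitute the explicit values $P^{(n)}(\lfloor n/2\rfloor-2)$, $P^{(n)}(\lfloor n/2\rfloor-1)$, $P^{(n)}(\lfloor n/2\rfloor)$, $\theta_0=\delta$, and $\Delta$ from Lemma~\ref{lem:Delta3} into the Ratio-type bound of Theorem~\ref{thm:hoffman-k=3}, then verify by direct expansion that the resulting rational function in $q$ collapses to the Singleton-like value. The paper records exactly the three auxiliary quantities you name (sum, triple product, denominator product) and concludes in the same way.
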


We have proved that for minimum distance value equal to $4$, $6$ and $8$, the Singleton-like bound given by Delsarte and Goethals and the Ratio-type bounds are equivalent. For larger minimum distance, we experimentally observed that the Ratio-type bound (Theorem \ref{thm:hoffman-like}) and its corresponding LP implementation (\ref{LPminorpolynomials}) coincide with the bound from Theorem \ref{thm:del-goeth}. In fact, our proposed eigenvalue Ratio-type bound runs much faster because it does not need the whole graph $\Gamma(\Alt_n(\F_q))$ as an input, but just its adjacency spectrum, which can be obtained using Lemma \ref{graphevs}.

\section{Coding-theoretic bounds}\label{sec:newcodingbounds}
In this section, we provide an overview of a number of coding theory methods to derive bounds on the parameters of alternating codes, such as Code-Anticode, Sphere Packing and Total Distance bounds. However, our results illustrate that such coding methods yield worse bounds than the Singleton-like one from \cite{delsarte1975alternating}, except for few parameters for which they give the same result.

\subsection{Code-Anticode Bound}
We start by recalling what an anticode is.

\begin{definition}
    A set $\mA\subseteq \Mat_{n}(\F_q)$ with the property that
    $\rk(M-N)\le r$
    for all $M,N\in\mA$
    is called an $r$-\textbf{anticode}. If $\mA$ is in addition an $\F_q$-subspace of $\Mat_{n}(\F_q)$, then it is called a \textbf{linear $r$-anticode}. 
\end{definition}
We also recall that the dimension of  $\Alt_{n}(\Fq)$ is $n(n-1)/2$.

In this subsection we only work with linear codes and anticodes: from now on, we omit the word ``linear".
The following result is an application of the celebrated Code-Anticode bound by Delsarte; see \cite{delsarte1998association}.

\begin{proposition}
    Let $\mC \subseteq \Alt_{n}(\F_q)$ be a code with minimum distance $d$ and let $\mA \subseteq \Alt_{n}(\F_q)$ be a $(d-1)$-anticode.
    Then     \begin{equation}\label{eq:code-anticode}
        |\mC| \cdot |\mA|\leq q^{n(n-1)/2}.
    \end{equation}
\end{proposition}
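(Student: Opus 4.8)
The plan is to apply the abstract Code--Anticode bound of Delsarte for association schemes, or equivalently to give the short direct argument via translates. First I would recall that $\Alt_n(\F_q)$, equipped with the rank distance and partitioned by the relations $\{(A,B) : \rk(A-B)=2i\}$, carries the structure of an association scheme (the alternating bilinear forms scheme), which is precisely the scheme underlying the distance-regular graph $\Gamma(\Alt_n(\F_q))$ of Theorem~\ref{thm:drg}. Delsarte's Code--Anticode inequality states that in any (commutative) association scheme on a vertex set $X$, if $\mC$ is a code whose nonzero distances avoid a set $\mS$ of relations and $\mA$ is an anticode all of whose nonzero distances lie in $\mS$, then $|\mC|\cdot|\mA|\le|X|$. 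Here $|X|=|\Alt_n(\F_q)|=q^{n(n-1)/2}$, a code of minimum distance $d$ uses only relations $2i$ with $2i\ge d$ (i.e.\ avoids $\mS=\{2i : 2i<d\}$, which since distances are even is the same as $2i\le d-1$), and a $(d-1)$-anticode uses only relations $2i\le d-1$; hence $\mC$ and $\mA$ are ``complementary'' in exactly the sense the bound requires, and \eqref{eq:code-anticode} follows.

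Alternatively — and this is the argument I would actually write out, since it avoids invoking the full machinery — I would argue by translation, exploiting that $\Alt_n(\F_q)$ is an $\F_q$-vector space on which the rank distance is translation invariant (as already used in the proof of Proposition~\ref{graphdegree}). The claim is that the translates $\{v+\mA : v\in\mC\}$ are pairwise disjoint. Suppose $v+a=w+a'$ with $v,w\in\mC$ and $a,a'\in\mA$; then $v-w=a'-a$, so $\rk(v-w)=\rk(a'-a)\le d-1$ because $\mA$ is a $(d-1)$-anticode. But $v,w\in\mC$ forces $v=w$ or $\rk(v-w)\ge d$; the latter is impossible, so $v=w$. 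Since each translate $v+\mA$ has exactly $|\mA|$ elements and they are disjoint subsets of $\Alt_n(\F_q)$, we get $|\mC|\cdot|\mA|\le|\Alt_n(\F_q)|=q^{n(n-1)/2}$, which is \eqref{eq:code-anticode}.

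There is essentially no hard part here; the only thing to be careful about is that the statement as phrased does not require $\mC$ or $\mA$ to be linear, and indeed the translate argument does not use linearity at all (only that the ambient set is a group under $+$ and that $\rk$ is translation invariant), so I would state it in that generality. If instead one wishes to stay strictly inside the ``we omit the word linear'' convention of this subsection, one can simply note that a linear code and a linear anticode are in particular a code and an anticode, and the inequality is unchanged. The one genuine subtlety — which only matters if one insists on deriving this from the scheme-theoretic Code--Anticode bound rather than the translate argument — is checking that the alternating bilinear forms scheme is indeed a (symmetric, commutative) association scheme and that ``minimum distance $d$'' and ``$(d-1)$-anticode'' translate into complementary subsets of relations; both facts are standard and follow from Theorem~\ref{thm:drg} together with the observation that ranks of alternating matrices are even, so no relation $2i$ with $2i=d-1$ can occur when $d$ is... in fact since here $d$ plays the role of a generic minimum distance (not necessarily $2d'$), the bookkeeping is immediate.
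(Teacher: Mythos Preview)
Your proposal is correct. The paper itself does not give a proof at all: it simply states the proposition as ``an application of the celebrated Code--Anticode bound by Delsarte'' with a reference to \cite{delsarte1998association}, which is precisely your first approach. Your alternative translate argument is a valid and self-contained elementary proof that the paper does not spell out; it has the advantage of requiring nothing beyond translation invariance of the rank metric and the group structure of $\Alt_n(\F_q)$, whereas the citation route implicitly leans on the association-scheme machinery.
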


The bound in Eq.~\eqref{eq:code-anticode} is known as the \textbf{Code-Anticode} bound and we can use it to derive some results on the dimension of an alternating code. 
For a subspace $U\subseteq\F_q^n$,  define 
$$ \Alt_{n}(U):=\{M\in \Alt_{n}(\F_q) \mid \colsp(M)\subseteq U \}.$$

Note that $\Alt_{n}(U)$ is an anticode, since it is a linear space and the rank of all the matrices contained in it is upper bounded by the dimension of $U$.

\begin{lemma}
Let $U\subseteq \F_q^n$, be any $t$-dimensional subspace. Then the dimension of  $\Alt_{n}(U)$ is $t(t-1)/2$.
\end{lemma}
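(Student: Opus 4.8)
The plan is to show that $\Alt_n(U)$ is isomorphic, as a vector space, to the space of alternating matrices of a smaller size, and then invoke the already-recalled dimension formula. Concretely, let $t = \dim U$, and fix a basis $u_1,\dots,u_t$ of $U$, which we extend to a basis $u_1,\dots,u_n$ of $\F_q^n$. Let $P \in \GL_n(\F_q)$ be the change-of-basis matrix whose columns are $u_1,\dots,u_n$. The congruence transformation $M \mapsto P^{-1} M (P^{-1})^\top$ is an $\F_q$-linear bijection of $\Mat_n(\F_q)$ onto itself that preserves the property of being alternating (since $(P^{-1} M (P^{-1})^\top)^\top = -P^{-1} M (P^{-1})^\top$ whenever $M = -M^\top$, and the diagonal entries remain zero, as congruence of an alternating matrix is again alternating) and preserves rank.

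Next I would identify what the condition $\colsp(M) \subseteq U$ becomes under this transformation. Since $M \in \Alt_n(\F_q)$ is alternating, $\colsp(M) = \rowsp(M)^{\text{(as column space of }M^\top) }$; more usefully, for alternating $M$ one has $\colsp(M) \subseteq U$ if and only if $M$, written in the basis $u_1,\dots,u_n$, has all entries outside the top-left $t \times t$ block equal to zero. Indeed, writing $N = P^{-1} M (P^{-1})^\top$, the column space of $M$ lies in $U = \langle u_1,\dots,u_t\rangle$ precisely when $P^{-1} M$ has zero rows below the $t$-th, i.e. when $N$ has zero rows below the $t$-th; and since $N$ is alternating it then also has zero columns to the right of the $t$-th. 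Hence $\colsp(M) \subseteq U$ is equivalent to $N$ having the block form $\begin{pmatrix} N' & 0 \\ 0 & 0 \end{pmatrix}$ with $N' \in \Alt_t(\F_q)$. This gives an $\F_q$-linear isomorphism $\Alt_n(U) \xrightarrow{\sim} \Alt_t(\F_q)$.

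Therefore $\dim_{\F_q} \Alt_n(U) = \dim_{\F_q} \Alt_t(\F_q) = t(t-1)/2$, using the fact recalled just above the lemma that the space of alternating $t \times t$ matrices has dimension $t(t-1)/2$ (equivalently, it has a basis given by $E_{ij} - E_{ji}$ for $1 \le i < j \le t$). This completes the argument.

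The main obstacle, such as it is, is being careful about the equivalence between the column-space condition on $M$ and the block-triangular (in fact block-diagonal) shape of $N$: one must use that $M$ is alternating to pass from "zero rows below $t$" to the full block-diagonal form, since a priori a matrix with column space in $U$ need only have vanishing lower rows, not vanishing right columns. For alternating matrices this is automatic because $\rowsp(M) = \colsp(M)$, so the symmetric vanishing is forced. Everything else is bookkeeping with the change of basis $P$.
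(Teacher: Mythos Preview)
Your proof is correct and follows essentially the same approach as the paper: both reduce to the case $U=\langle e_1,\dots,e_t\rangle$ via the congruence $M\mapsto BMB^\top$ (with $B=P^{-1}$ in your notation), then identify $\Alt_n(\langle e_1,\dots,e_t\rangle)$ with $\Alt_t(\F_q)$. Your write-up is in fact more explicit than the paper's, which states the isomorphism $\Alt_n(U)\to\Alt_n(\langle e_1,\dots,e_t\rangle)$ and leaves the final dimension count implicit; you spell out the block-shape argument and correctly note that the alternating condition is what forces the vanishing of the right columns once the bottom rows vanish.
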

\begin{proof}
Let $\mB_U:=\{u_1,\ldots,u_t\}$ be a basis of $U$ and $\phi:\F_q^n\to\F_q^n$ be an $\F_q$-linear isomorphism
that sends $U$ into the space spanned by $e_1,\ldots, e_t$. Let $B\in\GL_n(\F_q)$ be the matrix associated with $\phi$ with respect to the canonical basis.
Since $B$ is invertible, we have that the map $M\mapsto BMB^\top$ is a rank-preserving $\F_q$-linear isomorphism $\Alt_{n}(U)\to \Alt_{n}(\langle e_1,\ldots, e_t\rangle_{\F_q})$. 

\end{proof}

We can now apply the previous result in combination with the Code-Anticode Bound.

\begin{corollary}
Let $\mC\subseteq \Alt_{n}(\F_q)$ be an alternating $k$-dimensional code with minimum rank distance $2d$. We have
\begin{equation}\label{eq:Sing_Anticode}
    k\leq \frac{n(n-1)-2(d-1)(2d-1)}{2}.
\end{equation}
\end{corollary}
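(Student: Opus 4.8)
The plan is to apply the Code-Anticode bound from the preceding proposition with a carefully chosen anticode of the form $\Alt_{n}(U)$. Since $\mC$ has minimum rank distance $2d$, the Code-Anticode bound asks for a $(2d-1)$-anticode. First I would fix a subspace $U\subseteq\F_q^n$ of dimension $2d-1$. Such a subspace exists in the relevant range: the minimum distance $2d$ cannot exceed the maximal rank $2\left\lfloor\frac{n}{2}\right\rfloor$ of an alternating matrix, so $2d-1\leq n-1<n$. Every matrix in $\Alt_{n}(U)$ has column space contained in $U$, hence rank at most $2d-1$ (in fact at most $2d-2$ by the even-rank property of alternating matrices), so $\Alt_{n}(U)$ is indeed a $(2d-1)$-anticode.

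Next I would compute its size using the lemma proved just above: $\dim_{\F_q}\Alt_{n}(U)=\frac{(2d-1)(2d-2)}{2}=(2d-1)(d-1)$, and therefore $|\Alt_{n}(U)|=q^{(2d-1)(d-1)}$. Applying the Code-Anticode bound to $\mC$ and $\mA=\Alt_{n}(U)$ yields $|\mC|\cdot q^{(2d-1)(d-1)}\leq q^{n(n-1)/2}$. Taking logarithms in base $q$ and using $|\mC|=q^k$ gives $k+(2d-1)(d-1)\leq \frac{n(n-1)}{2}$, which rearranges to $k\leq\frac{n(n-1)-2(d-1)(2d-1)}{2}$, the claimed inequality.

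There is no genuine obstacle here; this is essentially a bookkeeping argument. The only points that need a little care are checking that a subspace of dimension $2d-1$ actually exists for the admissible parameters (which follows from the even-rank constraint, as noted) and correctly matching the anticode diameter $2d-1$ to the minimum distance $2d$ in the hypothesis of the Code-Anticode bound, where the off-by-one comes from the fact that the stated bound pairs a code of minimum distance $\delta$ with a $(\delta-1)$-anticode.
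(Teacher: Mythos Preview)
Your proof is correct and follows exactly the paper's approach: pick a $(2d-1)$-dimensional subspace $U\subseteq\F_q^n$, take $\mA=\Alt_n(U)$ as the $(2d-1)$-anticode, and apply the Code-Anticode bound together with the dimension formula for $\Alt_n(U)$. The paper's proof is terser, but the argument is identical.
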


\begin{proof}
 Let $U\subseteq \F_q^n$ be a $(2d-1)$-dimensional subspace and let $\mA=\Alt_{n}(U)$. Apply the Code-Anticode Bound with $\mC$ and $\mA$.
\end{proof}

\begin{remark}
    A comparison between the Singleton-like bound in Eq.~\eqref{eq:SingletonBound} and the bound in Eq.~\eqref{eq:Sing_Anticode}, shows that the first one is always better than the second one. The two bounds are equal if and only $n=2d$, which implies $k=1$ and therefore is a trivial case.
\end{remark}

\subsection{Sphere Packing Bound}

For $0 \le r \le n$
let 
$$\mB(\Alt_n(\F_q),r):=\{M \in \Alt_n(\F_q) \mid \rk(M) \le r\}$$
be the (\textbf{alternating}) \textbf{ball} of radius $r$ centered at $0$.
In \cite[Corollary 6.14]{gruica2022rank}, it has been computed the asymptotic volume of $\mB(\Alt_n(\F_q),r)$ as $q$ tends to infinity.

\begin{lemma}\cite{gruica2022rank} \label{lem:size_ball}
Let $0 \le r \le n$ be an integer. We have 
\begin{align*}
    |\mB(\Alt_n(\F_q),r)| \sim \begin{cases}
    q^{rn-r(r+1)/2} \quad &\textnormal{ if $r$ is even,} \\
    q^{(r-1)n-(r-1)r/2} \quad &\textnormal{ if $r$ is odd,}
    \end{cases} \quad \textnormal{ as $q \to +\infty$.}
\end{align*}
\end{lemma}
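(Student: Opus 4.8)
The plan is to prove Lemma~\ref{lem:size_ball} by reducing the size of the ball to the count of the largest rank stratum it contains, and then extracting the leading power of $q$ from the exact formula in Lemma~\ref{lem:rank_i}. First I would write $|\mB(\Alt_n(\F_q),r)| = \sum_{i=0}^{r} |\{M \in \Alt_n(\F_q) : \rk(M)=i\}|$, recalling that only even values of $i$ contribute since alternating matrices have even rank; thus the sum effectively runs over even $i \le r$, and the top term corresponds to $i = r$ if $r$ is even and $i = r-1$ if $r$ is odd. Since the summands are polynomials in $q$ of strictly increasing degree (which I would verify from the degree computation below), the asymptotic volume as $q \to +\infty$ is governed by this single top term.

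Next I would compute the $q$-degree of $|\{M \in \Alt_n(\F_q) : \rk(M)=i\}|$ for $i$ even, using Lemma~\ref{lem:rank_i}. The Gaussian binomial $\qqbin{n}{i}$ has $q$-degree $i(n-i)$, and inside the alternating sum the dominant contribution comes from the term that maximizes $\binom{s}{2}+\binom{i-s}{2}$ over $0 \le s \le i$; since $x \mapsto \binom{x}{2}+\binom{i-x}{2}$ is convex and symmetric about $s=i/2$, its maximum on $\{0,\dots,i\}$ is attained at the endpoints $s=0$ or $s=i$, giving $\binom{i}{2}$, and the corresponding $\qqbin{i}{s}$ contributes degree $0$. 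One should check there is no cancellation of this leading term: the $s=0$ and $s=i$ contributions are $q^{\binom{i}{2}}$ each (both with sign $(-1)^i = +1$ since $i$ is even), so they add rather than cancel. Hence the $q$-degree of the $i$-th stratum count is $i(n-i) + \binom{i}{2} = in - i^2 + i(i-1)/2 = in - i(i+1)/2$. Substituting $i=r$ (for $r$ even) gives exponent $rn - r(r+1)/2$, and substituting $i = r-1$ (for $r$ odd) gives exponent $(r-1)n - (r-1)r/2$, matching the two cases in the statement.

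Finally I would confirm that $i \mapsto in - i(i+1)/2$ is strictly increasing in $i$ over the relevant range, so that the top stratum genuinely dominates and lower strata only contribute lower-order terms; the discrete derivative is $n - i$, which is positive for $i < n$, and since $r \le n$ the only boundary concern is $i$ close to $n$, which I would handle directly (e.g. $r=n$ with $n$ even is the full space, of size $q^{n(n-1)/2}$, and the formula gives $n^2 - n(n+1)/2 = n(n-1)/2$, consistent). Putting these pieces together yields $|\mB(\Alt_n(\F_q),r)| \sim q^{rn - r(r+1)/2}$ for $r$ even and $\sim q^{(r-1)n - (r-1)r/2}$ for $r$ odd.

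The main obstacle I anticipate is the bookkeeping inside the alternating sum of Lemma~\ref{lem:rank_i}: one must argue carefully that the leading term $q^{\binom{i}{2}}$ (from $s=0$ and $s=i$) is not killed by cancellation against other terms and that the Gaussian binomial factors do not inflate the degree beyond $i(n-i)$. This is a routine but slightly delicate degree-and-sign analysis rather than a conceptual difficulty, and it is exactly the kind of computation that Lemma~\ref{lem:size_ball} attributes to \cite{gruica2022rank}, so one may alternatively just cite that reference for the precise leading-coefficient argument.
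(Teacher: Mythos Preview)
The paper does not give its own proof of this lemma; it is simply stated with a citation to \cite[Corollary~6.14]{gruica2022rank}. Your proposal therefore goes beyond what the paper does, supplying a direct argument from Lemma~\ref{lem:rank_i}, and the overall strategy (identify the top even stratum, extract its $q$-degree, check that lower strata have strictly smaller degree) is correct and is the natural route.

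There is, however, a genuine gap in your degree analysis of the inner alternating sum. You argue that $\binom{s}{2}+\binom{i-s}{2}$ is maximized at $s\in\{0,i\}$ with value $\binom{i}{2}$, and that at those endpoints $\qqbin{i}{s}$ has degree $0$. That is true, but you have not accounted for the degree of $\qqbin{i}{s}$ at \emph{interior} values of $s$: since $\deg_q \qqbin{i}{s}=s(i-s)$, the total $q$-degree of the $s$-th summand is
\[
\binom{s}{2}+\binom{i-s}{2}+s(i-s)=\binom{i}{2}
\]
for \emph{every} $s\in\{0,\dots,i\}$, not just the endpoints. So all terms contribute at the same top degree, and your claim that the two endpoint terms ``add rather than cancel'' to give leading coefficient $2$ is incorrect. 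What actually happens is that the leading coefficients are $(-1)^{i-s}$, and for $i$ even one gets $\sum_{s=0}^{i}(-1)^{i-s}=1$, so the top degree survives with coefficient $1$. Your final exponents $in-i(i+1)/2$ are therefore right, but the justification as written is wrong and needs this correction. You correctly flagged this as the ``main obstacle''; it is not merely bookkeeping but a point where your stated argument fails as written.
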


The following bound follows from a classical sphere packing argument.

\begin{proposition}
    An alternating code $\mC\subseteq \Alt_n(\F_q)$ with $|\mC|\geq 2$ and minimum distance $2d$ satisfies   
    \begin{equation}\label{eq:sphere_packing}
        |\mC|\leq \left\lfloor\frac{|\Alt_n(\F_q)|}{|\mB(\Alt_n(\F_q),t)|}\right\rfloor,
    \end{equation}
    where $t=\left\lfloor\frac{2d-1}{2}\right\rfloor$.
\end{proposition}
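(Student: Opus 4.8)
The plan is to run the classical sphere-packing (Hamming-type) argument in the metric space $(\Alt_n(\F_q),\rk)$. First I would recall that a code $\mC$ with minimum distance $2d$ has the property that the balls of radius $t$ centered at its codewords are pairwise disjoint, where $t$ is the largest integer with $2t < 2d$, i.e.\ $t = \lfloor (2d-1)/2 \rfloor = d-1$. Indeed, if two such balls $\mB(A,t)$ and $\mB(B,t)$ shared a point $M$, then by the triangle inequality for the rank metric $\rk(A-B) \le \rk(A-M) + \rk(M-B) \le 2t \le 2d-1 < 2d$, contradicting $A \ne B$ and $\dd_\rk(\mC) = 2d$. (The rank metric satisfies the triangle inequality because $\rk$ is a matrix norm, and this passes to the subspace $\Alt_n(\F_q)$.)

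Next, since the rank metric is translation invariant — as already used in the proof of Proposition \ref{graphdegree} — every ball $\mB(A,t) = A + \mB(\Alt_n(\F_q),t)$ has exactly $|\mB(\Alt_n(\F_q),t)|$ elements, independently of its center. Therefore the disjoint union $\bigcup_{A\in\mC}\mB(A,t)$ sits inside $\Alt_n(\F_q)$ and has cardinality exactly $|\mC|\cdot|\mB(\Alt_n(\F_q),t)|$, giving
\begin{equation*}
    |\mC|\cdot|\mB(\Alt_n(\F_q),t)| \le |\Alt_n(\F_q)|.
\end{equation*}
Dividing by $|\mB(\Alt_n(\F_q),t)|$ and using that $|\mC|$ is an integer to take the floor yields Eq.~\eqref{eq:sphere_packing}. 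The hypothesis $|\mC|\ge 2$ only serves to guarantee that the minimum distance, hence $d$ and $t$, are well defined.

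I do not expect a genuine obstacle here: the only point requiring a word of care is the identification $t = \lfloor (2d-1)/2\rfloor = d-1$ and the verification that radius-$t$ balls around codewords are disjoint, which is exactly the threshold computation $2t \le 2d-1$. One might also note in passing that, unlike Lemma \ref{lem:size_ball}, this proposition is exact (not asymptotic) and makes no use of the explicit ball size; Lemma \ref{lem:size_ball} is only needed afterwards to compare this bound numerically with the Singleton-like bound.
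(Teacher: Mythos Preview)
Your proposal is correct and is precisely the classical sphere-packing argument the paper alludes to; the paper does not spell out a proof beyond that remark, so there is nothing further to compare.
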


We can quite easily show that there are no nontrivial perfect alternating codes for $q$ sufficiently large.

\begin{proposition}\label{prop:No_perfect_asymptotic}
    There are no nontrivial perfect alternating rank-metric codes in $\Alt_n(\F_q)$, for $q$ sufficiently large.
\end{proposition}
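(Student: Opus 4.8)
The plan is to show that for a nontrivial perfect alternating code the sphere-packing bound in Eq.~\eqref{eq:sphere_packing} would have to be attained with equality, and then derive a contradiction from the asymptotic volume of the ball in Lemma~\ref{lem:size_ball}. Recall that $\mC$ is perfect precisely when the balls of radius $t=\lfloor\frac{2d-1}{2}\rfloor=d-1$ centered at the codewords partition $\Alt_n(\F_q)$; equivalently, $|\mC|\cdot|\mB(\Alt_n(\F_q),d-1)|=|\Alt_n(\F_q)|=q^{n(n-1)/2}$. Since $d\geq 2$ for a nontrivial code (if $d=1$ the only ``perfect'' code is the whole space, and $|\mC|\geq 2$ excludes the one-codeword case), we have $t=d-1\geq 1$, and since the code is nontrivial $|\mC|\geq 2$, so $|\mB(\Alt_n(\F_q),d-1)|<q^{n(n-1)/2}$, i.e. $d-1$ is strictly less than the covering radius — in particular $2(d-1)<n$ roughly. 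The key point is that $t=d-1$ is odd when $d$ is even and even when $d$ is odd, so I will treat the two parities of $t$ separately.

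First I would write out the divisibility/equality condition $|\mC|\cdot|\mB(\Alt_n(\F_q),d-1)|=q^{n(n-1)/2}$ and feed in the asymptotics of Lemma~\ref{lem:size_ball}. If $t=d-1$ is even, then $|\mB|\sim q^{tn-t(t+1)/2}$, so $|\mC|\sim q^{n(n-1)/2-tn+t(t+1)/2}$ as $q\to\infty$. If $t=d-1$ is odd, then $|\mB|\sim q^{(t-1)n-(t-1)t/2}$, giving $|\mC|\sim q^{n(n-1)/2-(t-1)n+(t-1)t/2}$. In either case this is the exact value that $|\mC|$ must equal (for large $q$, the leading term determines it, since $|\mC|$ is an integer that is forced by the exact identity $|\mC|=q^{n(n-1)/2}/|\mB|$). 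The plan is then to compare this forced value of $|\mC|$ against the Singleton-like bound of Theorem~\ref{thm:del-goeth}, $A_q(n,2d)\leq q^{\frac{n(n-1)}{2\lfloor n/2\rfloor}(\lfloor n/2\rfloor-d+1)}$, and show that for $q$ large the sphere-packing-forced cardinality strictly exceeds the Singleton-like bound — a contradiction. This reduces everything to an inequality between two explicit exponents (a quadratic in $t$ versus a linear expression in $\lfloor n/2\rfloor$ and $d$), which can be checked by elementary manipulation, handling the cases $n$ even / $n$ odd and $t$ even / $t$ odd.

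The main obstacle I anticipate is the exponent comparison in the odd-$t$ case: when $t=d-1$ is odd the ball essentially ``loses'' a layer (its volume has the same leading exponent as the radius-$(t-1)$ ball), which makes $|\mC|$ correspondingly larger and the contradiction with Singleton easier in spirit — but one must be careful that the asymptotic equivalence in Lemma~\ref{lem:size_ball} is only a leading-order statement, so the argument should be phrased as: for all sufficiently large $q$, $q^{n(n-1)/2}/|\mB(\Alt_n(\F_q),d-1)|$ is strictly larger than $q^{\frac{n(n-1)}{2\lfloor n/2\rfloor}(\lfloor n/2\rfloor-d+1)}$, hence no perfect code can exist. One small case that needs separate attention is $d-1$ close to the covering radius, where the ball is almost all of $\Alt_n(\F_q)$ and $|\mC|$ would be forced to be very small (possibly forcing $|\mC|=q^{\,c}$ with $c\le 0$, i.e. the trivial code) — but this is exactly the regime the hypothesis ``nontrivial'' and ``$q$ sufficiently large'' is designed to rule out, and it follows once the exponent inequality is established. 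I would finish by stating explicitly that the bound on $q$ produced is uniform once $n$ and $d$ are fixed, which is all the statement claims.
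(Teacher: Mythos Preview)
Your proposal is correct and follows essentially the same approach as the paper: assume perfection, so that $|\mC|\cdot|\mB(\Alt_n(\F_q),d-1)|=q^{n(n-1)/2}$, substitute the asymptotic ball volume from Lemma~\ref{lem:size_ball}, and compare the resulting forced exponent of $|\mC|$ against the Singleton-like bound of Theorem~\ref{thm:del-goeth}, splitting cases by the parities of $t=d-1$ and $n$. The paper carries out one case explicitly (namely $t$ even, $n$ odd), reducing to the exponent equation $n\left(\frac{n-2t-1}{2}\right)+tn-\frac{t(t+1)}{2}=\frac{n(n-1)}{2}$, which forces $t\in\{0,-1\}$, and then states that the remaining cases are analogous --- exactly the ``elementary manipulation'' you describe.
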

\begin{proof}
   Suppose that there exists a perfect alternating code in $\Alt_n(\F_q)$ with minimum distance $2d$. Define $t=\left\lfloor\frac{2d-1}{2}\right\rfloor=d-1$. Suppose that $t$ is even, i.e., that $d$ is odd.
   Then from the Sphere Packing bound in Eq.~\eqref{eq:sphere_packing} and Lemma \ref{lem:size_ball} we have that 
   \begin{align}\label{eq:SB+SP}
       |\mC|\cdot q^{tn-\frac{t(t+1)}{2}} = q^{\frac{n(n-1)}{2}}.
   \end{align}
   By the Singleton-like bound. We have that 
   $$   |\mC|\leq \begin{cases}
       q^{n\left(\frac{n-2d+1}{2}\right)} &\textnormal{ if } n \textnormal{ is odd}, \\
        q^{(n-1)\left(\frac{n-2d+2}{2}\right)}   &\textnormal{ if } n \textnormal{ is even}.
   \end{cases}$$
   If $n$ is odd, by combining the Singleton-like bound with Eq.~\eqref{eq:SB+SP}, we get
   \begin{align*}
        n\left(\frac{n-2t-1}{2}\right) + tn-\frac{t(t+1)}{2} = \frac{n(n-1)}{2},
   \end{align*}
   which holds only for $t=0$ or $t=-1$. Since $t\geq 0$, the only admissible possibility is $t=0$, so $d=1$. In this case we get the whole space, which is a trivial case.
   The cases $n,t$ even and $t$ odd are similar.
\end{proof}

We can also show that there are no perfect alternating rank-metric codes for any value of $q$ when $d$ is even.

\begin{proposition}\label{prop:perfect_d_even}
    For any $q$, there are no perfect alternating rank-metric codes with minimum distance $2d$ in $\Alt_n(\F_q)$, for an even values $d$.
\end{proposition}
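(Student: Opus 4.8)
The plan is to rule out perfect codes when $d$ is even by a parity-type obstruction in the exponents, exactly as in the proof of Proposition \ref{prop:No_perfect_asymptotic}, but now being careful that the sphere-packing equality must hold \emph{exactly} (not just asymptotically) for a perfect code, and deriving a contradiction that does not depend on $q$. First I would set $t=\left\lfloor\frac{2d-1}{2}\right\rfloor=d-1$, which is odd since $d$ is even. A perfect code $\mC$ with minimum distance $2d$ satisfies the Sphere Packing bound in Eq.~\eqref{eq:sphere_packing} with equality and without the floor, i.e. $|\mC|\cdot|\mB(\Alt_n(\F_q),t)|=q^{n(n-1)/2}$. The point of taking $d$ even is that $t=d-1$ is odd, so by Lemma \ref{lem:size_ball} the ball $\mB(\Alt_n(\F_q),t)$ has the same asymptotic size as $\mB(\Alt_n(\F_q),t-1)$; intuitively, a perfect packing by balls of odd radius $t$ cannot exist because odd-rank alternating matrices do not exist, so the ``extra shell'' of radius $t$ is empty and the ball of radius $t$ literally equals the ball of radius $t-1$.

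The key step I would carry out is to make this ``empty shell'' observation exact rather than asymptotic: since every alternating matrix has even rank, the set $\{M\in\Alt_n(\F_q):\rk(M)=t\}$ is empty when $t$ is odd, hence $\mB(\Alt_n(\F_q),t)=\mB(\Alt_n(\F_q),t-1)$ as sets, for all $q$. Consequently the sphere-packing identity for a hypothetical perfect code reads $|\mC|\cdot|\mB(\Alt_n(\F_q),t-1)|=q^{n(n-1)/2}$. But $t-1=d-2$ is even and $t-1<2d-1$, meaning the balls of radius $t$ used to ``tile'' the space overlap: more precisely, two codewords at the minimum distance $2d=2(t+1)$ have balls of radius $t$ which, although they do not contain a common codeword, are not disjoint as subsets of $\Alt_n(\F_q)$ once we have collapsed radius $t$ to radius $t-1$ — so the covering/packing counting that produced the equality is inconsistent unless the code is trivial. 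I would formalize this by observing that a perfect code of radius $t$ with $t$ odd would also have to be a perfect code of radius $t-1$ (same balls), forcing minimum distance $\ge 2(t-1)+1=2d-3$ to be compatible, and then checking that the only alternating code that is simultaneously perfect for radius $t-1$ and has minimum distance $2d$ is the whole space (the $d=1$, $|\mC|=q^{n(n-1)/2}$ case), which is excluded by ``perfect'' being understood in the nontrivial sense, or simply contradicts $d\ge 2$ implicit in $d$ even and the code being nontrivial.

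Alternatively, and perhaps more cleanly, I would argue purely on cardinalities: for a nontrivial perfect code the identity $|\mC|=q^{n(n-1)/2}/|\mB(\Alt_n(\F_q),t-1)|$ must hold with $|\mC|$ a positive integer $\ge 2$ and, crucially, $\mC$ must have minimum distance exactly $2d=2t+2$; but a code whose balls of radius $t-1$ partition the space has minimum distance at most $2(t-1)+2 = 2t = 2d-2 < 2d$, since two codewords whose balls of radius $t-1$ are ``adjacent'' (tangent) lie at distance $\le 2(t-1)+2$, contradicting $\dd_\rk(\mC)=2d$. The main obstacle I anticipate is making the geometric ``tangency'' statement rigorous — i.e. showing that in a perfect packing of $\Alt_n(\F_q)$ by balls of radius $r$ there necessarily exist two centers at distance $\le 2r+2$ (otherwise the balls of radius $r+1$ would also be disjoint, contradicting that the radius-$r$ balls already cover everything, since $|\mB(\cdot,r+1)|>|\mB(\cdot,r)|$ whenever the shell of rank $r+1$ or $r+2$ is nonempty). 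Handling the parity of $r$ correctly in that last inequality, and separating the even-$n$ and odd-$n$ subcases of Lemma \ref{lem:size_ball} where needed, is the only place that requires care; the rest is bookkeeping analogous to Proposition \ref{prop:No_perfect_asymptotic}.
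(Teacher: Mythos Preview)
Your key observation that $\mB(\Alt_n(\F_q),t)=\mB(\Alt_n(\F_q),t-1)$ when $t=d-1$ is odd is correct and is indeed the heart of the matter. However, the counting/tangency argument you propose to finish the proof does not close. With $r=t-1$, to get a contradiction from cardinalities you would need balls of some radius $r'>r$ satisfying $|\mB(r')|>|\mB(r)|$ that are still pairwise disjoint; disjointness forces $2r'<2d=2r+4$, i.e.\ $r'\le r+1$. But $r+1=t$ is odd, so $|\mB(r+1)|=|\mB(r)|$ and no contradiction arises. Pushing to $r'=r+2$ gives $2r'=2d$, exactly the minimum distance, so those balls need not be disjoint. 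Thus your step ``otherwise the balls of radius $r+1$ would also be disjoint, contradicting $|\mB(\cdot,r+1)|>|\mB(\cdot,r)|$'' fails precisely because the shell you need is empty --- the same parity phenomenon that gave you the initial observation blocks the counting route.

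The paper avoids counting entirely and instead exhibits an explicit uncovered element. After translating so that $O\in\mC$, take $A\in\mC$ with $\rk(A)=2d$ and write $A=\sum_{i=1}^{d}A_i$ with each $\rk(A_i)=2$ (Lemma~\ref{lem:sumrank2}); set $B=\sum_{i=1}^{d/2}A_i$. Subadditivity of rank forces $\rk(B)=\rk(A-B)=d$. Then $B$ lies at distance $d>t$ from $O$, and if $B$ were in the ball of radius $t$ around some $C\in\mC$ one would have $\rk(B-C)\le d-1$, hence $\le d-2$ by parity, so $\rk(C)\le (d-2)+d=2d-2<2d$, contradicting the minimum distance. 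In fact this last triangle-inequality step is exactly what turns your observation into a complete proof: any alternating matrix at distance exactly $d$ from a codeword cannot be covered by any ball of radius $t$. You were one move away --- but that move is the explicit-element argument, not a packing count.
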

\begin{proof}
    Let $\mC\subseteq \Alt_n(\F_q)$ be an alternating rank-metric codes with minimum distance $2d$ and let $t=d-1$.
    Consider the packing given by the balls of radius $t$ centered at the codewords of $\mC$. Let $A\in\mC$ be a codeword of rank $2d$. Then by Lemma \ref{lem:sumrank2}, $A$ can be written as the sum of $d$ alternating matrices of rank $2$. Consider $B$ to be the sum of $d/2$ among these matrices of rank $2$. Then $B$ is an alternating matrix of rank $d$, which cannot be covered by balls of radius $d-1$ centered at codeword of $\mC$. This shows that the considered packing does not cover the whole space $\Alt_n(\F_q)$, hence $\mC$ cannot be perfect.
\end{proof}

\subsection{Total Distance Bound}
We can estimate the \emph{total distance} of a code $\mC$, i.e. the sum of the rank distances between the codewords of an $\mC$ in order to give another bound on the cardinality of $\mC$. The proof of this bound is inspired by \cite{byrne2021fundamental}.

\begin{proposition}\label{tot_dis_bound}
    Let $\mC\subseteq\Alt_n(\F_q)$ be an alternating code with minimum distance $2d$. Then 
    $$d\leq \begin{cases}
        \left\lfloor\frac{n}{2}\right\rfloor +\frac{1-q^{1-n}|\mC|}{|C|-1} & \textnormal{ if } n \textnormal{ is even,}\\
        \left\lfloor\frac{n-1}{2}\right\rfloor +\frac{1-q^{3-2n}|\mC|}{|\mC|-1} & \textnormal{ if } n \textnormal{ is odd}.
    \end{cases}$$
    In particular, if $n$ is even and $d>\frac{n}{2} -q^{1-n}$, then
    \begin{equation*}\label{eq:tot_dis_bound}
        |\mC|\leq \frac{d-\frac{n}{2}+1}{d-\frac{n}{2} +q^{1-n}};
    \end{equation*}
    if $n$ is odd and $d>\frac{n-1}{2} -q^{3-2n}$, then
    \begin{equation*}\label{eq:tot_dis_boundnodd}
        |\mC|\leq \frac{d-\frac{n-1}{2}+1}{d-\frac{n-1}{2} +q^{3-2n}}.
    \end{equation*}
\end{proposition}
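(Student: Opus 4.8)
The plan is to bound the total distance $\sum_{A,B \in \mC} \rk(A-B)$ from above and below, then rearrange. For the upper bound, I would observe that every matrix in $\Alt_n(\F_q)$ has rank at most $2\lfloor n/2 \rfloor$, so trivially $\sum_{A \neq B} \rk(A-B) \le 2\lfloor n/2 \rfloor \,|\mC|(|\mC|-1)$. This gives a weak estimate; to get the stated bounds, one needs the sharper fact (following the style of \cite{byrne2021fundamental}) that for any alternating code, the sum of the distances is bounded by a quantity depending on the volume of the whole space. Concretely, I would write the total distance as $\sum_{r \ge 1} 2r \cdot |\{(A,B) \in \mC^2 : \rk(A-B) = 2r\}|$ and compare the distribution of differences $A-B$ with the distribution of ranks over all of $\Alt_n(\F_q)$: since the differences lie in $\Alt_n(\F_q)$ and the maximum rank there is $2\lfloor n/2\rfloor$, while the number of elements of full rank is at most $|\Alt_n(\F_q)|$, one loses at least a $2$ in rank for a proportion of pairs governed by $q^{1-n}$ (for $n$ even) and $q^{3-2n}$ (for $n$ odd), which are exactly the asymptotic reciprocal densities of the top rank stratum coming from Lemma~\ref{lem:rank_i} / Lemma~\ref{lem:size_ball}.

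More precisely, the mechanism I expect is: fix $A \in \mC$ and consider the multiset $\{A - B : B \in \mC, B \neq A\}$ of size $|\mC|-1$, all lying in $\Alt_n(\F_q)\setminus\{O\}$. The average rank of a uniformly random nonzero element of $\Alt_n(\F_q)$ is $2\lfloor n/2\rfloor$ minus a correction term; since our multiset is a \emph{subset} of the nonzero alternating matrices and the top-rank stratum has relative size roughly $q^{-(n-1)}$ (even case) or $q^{-(2n-3)}$ (odd case) by Lemma~\ref{lem:rank_i}, an averaging/rearrangement argument shows $\frac{1}{|\mC|-1}\sum_{B \neq A} \rk(A-B) \le 2\lfloor n/2\rfloor - 2 + \text{(a term that is $\Theta$ of those densities times $|\mC|$)}$. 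Summing over $A$ and dividing, the per-code-pair average rank $\ge 2d$ (by the minimum distance hypothesis) while the average is $\le 2\lfloor n/2\rfloor + \frac{2(1-q^{1-n}|\mC|)}{|\mC|-1}$ in the even case; dividing by $2$ yields the first displayed inequality. The odd case is identical with $q^{1-n}$ replaced by $q^{3-2n}$ and $\lfloor n/2 \rfloor$ by $\lfloor (n-1)/2\rfloor$.

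From $d \le \lfloor n/2\rfloor + \frac{1-q^{1-n}|\mC|}{|\mC|-1}$, the ``in particular'' statement is pure algebra: setting $x = |\mC|$ and $c = d - \lfloor n/2\rfloor$, the inequality reads $c(x-1) \le 1 - q^{1-n}x$, i.e. $x(c + q^{1-n}) \le c + 1$; when $c + q^{1-n} > 0$, that is exactly when $d > \frac{n}{2} - q^{1-n}$, dividing gives $|\mC| \le \frac{d - \frac{n}{2} + 1}{d - \frac{n}{2} + q^{1-n}}$. The odd case is the same manipulation with $q^{3-2n}$.

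The main obstacle I anticipate is justifying the sharp coefficient in the upper bound on the total distance — i.e. proving that the relevant density constant is precisely $q^{1-n}$ (resp. $q^{3-2n}$) and not merely $\Theta$ of it. This requires a careful moment computation: one must evaluate $\sum_{M \in \Alt_n(\F_q)} \rk(M)$ exactly using Lemma~\ref{lem:rank_i}, identify the leading and subleading terms, and check that the worst-case code distribution (all differences concentrated in the top rank stratum) produces exactly the stated bound. The double-counting bookkeeping — keeping track of ordered versus unordered pairs, the diagonal $A=B$ contributing $0$, and the factor of $2$ between geodesic and rank distance — will need to be handled with care, but is routine once the density constant is pinned down. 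Following \cite{byrne2021fundamental}, I expect the exact identity to fall out cleanly because alternating matrices of each rank are counted by a single clean $q$-binomial expression.
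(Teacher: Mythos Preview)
Your lower bound $S(\mC)\ge 2d\,|\mC|(|\mC|-1)$ and the final algebraic rearrangement are both fine, but the heart of the argument --- the upper bound on $S(\mC)$ --- has a genuine gap.

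You propose to obtain the improvement over the trivial bound $S(\mC)\le 2\lfloor n/2\rfloor\,|\mC|(|\mC|-1)$ by comparing the rank distribution of the differences $A-B$ with the rank distribution on all of $\Alt_n(\F_q)$, reading off $q^{1-n}$ (resp.\ $q^{3-2n}$) as ``the reciprocal density of the top rank stratum''. This cannot work. First, the multiset $\{A-B:A,B\in\mC\}$ is not a uniform sample from $\Alt_n(\F_q)$, so knowing the ambient rank distribution gives you no control whatsoever on the ranks of these particular differences; nothing you have written prevents \emph{all} differences from being full-rank, which would collapse your bound back to the trivial one. Second, the constants $q^{1-n}$ and $q^{3-2n}$ are \emph{not} rank-stratum densities (the full-rank stratum in $\Alt_n(\F_q)$ has density close to $1$, not $q^{1-n}$); they are exactly $1/q^{n-1}$ and $1/q^{2n-3}$, the reciprocals of the number of possible first rows (resp.\ first two rows) of an alternating matrix. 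Lemma~\ref{lem:rank_i} and Lemma~\ref{lem:size_ball} play no role here.

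The mechanism the paper uses is a pigeonhole/Cauchy--Schwarz argument, not an averaging one. Write $S_2$ for the number of ordered pairs $(X,Y)\in\mC^2$ with $\rk(X-Y)\le 2\lfloor n/2\rfloor-2$; then $S(\mC)\le 2\lfloor n/2\rfloor\,|\mC|(|\mC|-1)-2S_2$, and the point is to \emph{lower}-bound $S_2$. For $n$ even, observe that if $X$ and $Y$ share the same first row then $X-Y$ has a zero row and hence is not full-rank. Setting $N_A=|\{X\in\mC:X^1=A\}|$ for $A\in\F_q^{1\times n}$ with $A_{1,1}=0$, one has $\sum_A N_A=|\mC|$ and $S_2\ge\sum_A N_A(N_A-1)$; Cauchy--Schwarz then gives $\sum_A N_A^2\ge |\mC|^2/q^{n-1}$, which is precisely where $q^{1-n}$ enters. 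For $n$ odd one repeats this with the first \emph{two} rows (there are $q^{2n-3}$ admissible choices), producing $q^{3-2n}$. Once you replace your density heuristic by this pigeonhole step, the rest of your write-up goes through.
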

\begin{proof}
    First of all define the ``total distance'' of $\mC$ as
$$S(\mC)=\sum_{\substack{X,Y\in\mC\\ X\ne Y}}\rk(X-Y).$$
    Since for every $X,Y\in\mC$ with $X \ne Y$ we have that $\rk(X-Y)\geq 2d$, it holds
\begin{equation}\label{eq:LBonS}
        S(\mC)\geq 2d|\mC|(|\mC|-1). 
    \end{equation}
    Consider the two quantities
    \begin{align*}
        S_1&:=|\{(X,Y)\in\mC^2 : \rk(X,Y)=2\left\lfloor\frac{n}{2}\right\rfloor\}|,\\
        S_2&:=|\{(X,Y)\in\mC^2 : \rk(X,Y)\leq 2\left\lfloor\frac{n}{2}\right\rfloor-2\}|.
    \end{align*}
    Then, since $X\neq Y$, we have $S_1+S_2=|\mC|(|\mC|-1)$.
    We have 
    \begin{align}
        S(\mC)&\leq 2\left\lfloor\frac{n}{2}\right\rfloor S_1 + \left(2\left\lfloor\frac{n}{2}\right\rfloor-2\right)S_2\nonumber \\
        &=2\left\lfloor\frac{n}{2}\right\rfloor |\mC|(|\mC|-1) -2S_2. \label{eq:UBonS}
    \end{align}
    In order to derive a lower bound on $S_2$, we distinguish two cases. 
   If $n$ is even, observe that if two codewords $X,Y\in\mC$ have the same first row, then their difference cannot have full rank, that is, $\rk(X-Y)\ne n$. For $X\in\mC$ denote by $X^1$ the first row of $X$.
    Hence, for $A\in\F_q^{1\times n}$ define 
    $$N_A:=|\{X\in\mC : X^1 =A\}|.$$
    We have that 
    $$\sum_{\substack{A\in\F_q^{1\times n}\\A_{1,1}=0}}N_A=|\mC|, \qquad \textnormal{ and } \qquad S_2\geq \sum_{\substack{A\in\F_q^{1\times n}\\A_{1,1}=0}}N_A(N_A-1).$$
    Combining Eq.~\eqref{eq:LBonS}, Eq.~\eqref{eq:UBonS} and the above inequality, we obtain
    \allowdisplaybreaks
    \begin{align*}
        2d|\mC|(|\mC|-1)\leq S(\mC)&\leq n |\mC|(|\mC|-1) -2S_2\\ 
        &\leq n|\mC|(|\mC|-1) -2\sum_{\substack{A\in\F_q^{1\times n}\\A_{1,1}=0}}N_A^2 +2\sum_{\substack{A\in\F_q^{1\times n}\\A_{1,1}=0}}N_A\\
        &=n |\mC|(|\mC|-1) -2\sum_{\substack{A\in\F_q^{1\times n}\\A_{1,1}=0}}N_A^2 +2|\mC|\\
        &\leq n|\mC|(|\mC|-1)+2|\mC|-2|\mC|^2q^{1-n}, 
    \end{align*}
    where the last inequality follows from the Cauchy–Schwarz inequality. The first bound in the statement follows by  rearranging the terms.

   If $n$ is odd, we slight modify the argument above. Indeed, in this case we note that if two codewords $X,Y\in\mC$ have the same first two rows, then their difference cannot have full rank, that is, $\rk(X-Y)\ne n-1$. For $X\in\mC$ denote by $X^{1,2}$ the first two rows of~$X$.
    Hence, for $A\in\F_q^{2\times n}$ define 
    $$N_A:=|\{X\in\mC : X^{1,2} =A\}|.$$
   Now we proceed as before:    $$\sum_{\substack{A\in\F_q^{2\times n}\\A_{1,1}=A_{2,2}=0\\A_{2,1}=-A_{1,2}}}N_A=|\mC|, \qquad \textnormal{ and } \qquad S_2\geq \sum_{\substack{A\in\F_q^{2\times n}\\A_{1,1}=A_{2,2}=0\\A_{2,1}=-A_{1,2}}}N_A(N_A-1).$$
   Hence we obtain
    \begin{align*}
        2d|\mC|(|\mC|-1)\leq S(\mC)&\leq (n-1)|\mC|(|\mC|-1) -2S_2\\ 
        &\leq (n-1)|\mC|(|\mC|-1) -2\sum_{\substack{A\in\F_q^{2\times n}\\A_{1,1}=A_{2,2}=0\\A_{2,1}=-A_{1,2}}}N_A^2 +2\sum_{\substack{A\in\F_q^{2\times n}\\A_{1,1}=A_{2,2}=0\\A_{2,1}=-A_{1,2}}}N_A\\
        &=(n-1) |\mC|(|\mC|-1) -2\sum_{\substack{A\in\F_q^{2\times n}\\A_{1,1}=A_{2,2}=0\\A_{2,1}=-A_{1,2}}}N_A^2 +2|\mC|\\
        &\leq (n-1)|\mC|(|\mC|-1)+2|\mC|-2|\mC|^2q^{3-2n}, 
    \end{align*}
    where the last inequality follows from the Cauchy–Schwarz inequality. The bound again follows by rearranging the terms.
\end{proof}

\begin{remark}

    Observe that when $n$ is even the bound from Proposition~\ref{tot_dis_bound} holds only if $d>\frac{n}{2}-q^{1-n}\geq \frac{n}{2}-1$, while when $n$ is odd the bound holds only if $d>\frac{n-1}{2}-q^{3-2n}\geq \frac{n-1}{2}-1$. Since $d\leq \left\lfloor\frac{n}{2}\right\rfloor$, we have that the Total Distance bound gives admissible results only when $d=\left\lfloor\frac{n}{2}\right\rfloor$. In this case we get  in fact 
    $$A_q\left(n,2\left\lfloor\frac{n}{2}\right\rfloor\right)\leq \begin{cases}
        q^{n-1} & \textnormal{if } n \textnormal{ is even,}\\
        q^{2n-3} & \textnormal{if } n \textnormal{ is odd}.
    \end{cases}$$ 
    This upper bound coincides with the Singleton-like bound in Eq.~\eqref{eq:SingletonBound} for $n$ even.  On the other hand, when $n$ is odd the two bounds coincide only if $n=3$.
\end{remark}

The total distance bound  states that, when $n$ is even, the dimension of a linear alternating code $C\subseteq \Alt_n(\F_q)$ is at most $n-1$ whenever the minimum distance of the code $\mC$ is $n$.

The next result shows that Delsarte and Goethals' Singleton-like bound is far from being sharp in case of linear alternating rank-metric codes.

\begin{proposition}\cite[Lemma 3]{GQ2006}
Let $n$ even, $q$ odd and $\mC\subseteq \Alt_n(\F_q)$ with minimum distance $2d=n$, then 
$$ \dim(\mC) \leq \frac{n}{2}.$$
\end{proposition}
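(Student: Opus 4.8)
The statement asserts that for $n$ even, $q$ odd, a linear alternating code $\mC\subseteq\Alt_n(\F_q)$ with minimum distance $2d=n$ satisfies $\dim(\mC)\le n/2$. Recall that minimum distance $n$ means every nonzero $M\in\mC$ has $\rk(M)=n$, i.e. every nonzero codeword is an invertible alternating matrix. The plan is to exploit the algebra of invertible alternating matrices: over a field of odd characteristic, an invertible alternating matrix $M$ defines a nondegenerate alternating (symplectic) bilinear form, and the key classical fact is that its Pfaffian $\mathrm{Pf}(M)$ is nonzero, with $\mathrm{Pf}(M)^2=\det(M)$. So the crux will be to view $\mC$ as a linear space on which a suitable quadratic (or determinant-like) function is anisotropic, and bound its dimension.

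Concretely, first I would set up the map $\phi:\mC\to\F_q$ given by $\phi(M)=\mathrm{Pf}(M)$ (after fixing the standard Pfaffian on $\Alt_n$). Since $n$ is even this is well-defined, and $\phi$ is a homogeneous polynomial of degree $n/2$ in the entries of $M$; restricted to the linear space $\mC$ it becomes a degree-$n/2$ form in $\dim(\mC)$ variables. The hypothesis on the minimum distance says exactly that $\phi(M)\ne 0$ for every $M\in\mC\setminus\{O\}$: the Pfaffian form is \emph{anisotropic} on $\mC$. The second step is to invoke a bound on the dimension of a space carrying an anisotropic form of degree $e=n/2$ over $\F_q$: the Chevalley–Warning circle of ideas shows that a form of degree $e$ over $\F_q$ in more than $e$ variables always has a nontrivial zero. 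Hence $\dim_{\F_q}(\mC)\le e=n/2$, which is the claim.

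The main obstacle — and the step to handle carefully — is the passage from "minimum distance $n$" to "$\mathrm{Pf}$ is anisotropic," together with making sure the Chevalley–Warning input is applied to the right object. One subtlety is that $\mathrm{Pf}$ is only defined after a choice of coordinates on $\Alt_n(\F_q)$, but any such choice differs by a scalar determined by a change of basis, and anisotropy is basis-independent, so this is harmless. A second subtlety: Chevalley–Warning gives a \emph{nonzero} solution to $\phi=0$ only when the number of variables exceeds the degree; so the argument yields $\dim(\mC)\le n/2$ but not less, which is exactly the asserted (and sharp) bound — and this is why $q$ odd is essential, since in even characteristic $\mathrm{Pf}$ and the rank-$n$ condition decouple (the Arf invariant enters) and the clean degree-$n/2$ anisotropic-form picture breaks. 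Finally I would note the bound is achieved, e.g. by taking $\mC$ spanned by a "generic pencil" realizing an anisotropic Pfaffian form, consistent with the existence results mentioned earlier for $n$ even, $q$ even being open but $q$ odd behaving differently here.
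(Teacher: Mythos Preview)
The paper does not give its own proof of this proposition; it merely cites \cite[Lemma~3]{GQ2006}. Your Pfaffian plus Chevalley--Warning argument is correct and is in fact the classical route to this bound (and almost certainly the one used in the cited reference): the Pfaffian is a homogeneous polynomial of degree $n/2$ in the entries of an $n\times n$ alternating matrix, vanishes precisely on the singular ones, and hence restricts to an anisotropic form of degree $n/2$ on the linear space $\mC$; Chevalley's theorem then forces $\dim_{\F_q}(\mC)\le n/2$.

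One point deserves correction: your explanation of why ``$q$ odd is essential'' is mistaken. The identity $\mathrm{Pf}(M)^2=\det(M)$ and the equivalence $\mathrm{Pf}(M)\ne 0 \Leftrightarrow M$ invertible hold over \emph{every} field, including characteristic~$2$, and Chevalley's theorem is valid over any finite field. Your argument therefore establishes $\dim(\mC)\le n/2$ for all $q$, not just $q$ odd. The hypothesis $q$ odd in the statement reflects the context of the cited reference and the fact that for $q$ even alternating MRD codes of size $q^{n-1}$ are known to exist (so the contrast between the linear bound $n/2$ and the Singleton-like bound $n-1$ is what makes the odd-$q$ case noteworthy); it is not a limitation of the Pfaffian method. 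The Arf invariant is not relevant here---it pertains to quadratic forms in characteristic~$2$, not to Pfaffians of alternating matrices.
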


Previous lemma suggests that there is space for improving the bound at least for the case of linear codes.

\section{Concluding remarks}\label{sec:conclusion}

We proposed a method based on eigenvalue bounds as a simpler alternative to bound the size of alternating rank-metric codes, avoiding the use of the corresponding association scheme needed to apply Delsarte's LP approach, as it was classically done in \cite{delsarte1975alternating}. In fact, the best  Ratio-type eigenvalue bounds can also be computed in polynomial time via closed formulas for small minimum distances, and via an LP for general minimum distance, similarly as the LP bound corresponding to the scheme for bilinear alternating forms proposed by Delsarte and Goethals in \cite{delsarte1975alternating}. Computational experiments show that the proposed eigenvalue bounds are equally good as the bound proposed in \cite{delsarte1975alternating}. In fact, we can prove that both algebraic approaches are equivalent to the corresponding Singleton-like bound derived in \cite{delsarte1975alternating} for small values of the minimum distance.
It would be interesting to prove the equivalence between the Ratio-type LP and Delsarte LP approaches for general minimum distance and alternating codes. Moreover, given the equality, it would be also interesting to use the explicit bound in order to derive an upper bound on $\alpha_4(\Gamma(\Alt_n(\F_q)))$.

\bigskip
\subsection*{Acknowledgements}
Aida Abiad is supported by the Dutch Research Council through the grant VI.Vidi.213.085. Gianira N. Alfarano is supported by he Swiss National Foundation through the grant no. 210966. Alberto Ravagnani is supported by the Dutch Research Council through the grants VI.Vidi.203.045, OCENW.KLEIN.539, and by the Royal Academy of Arts and Sciences of the Netherlands.

\bibliographystyle{abbrv}
\bibliography{references}

\begin{thebibliography}{10}

\bibitem{ACF2019}
A.~Abiad, G.~Coutinho, and M.~A. Fiol.
\newblock {On the $k$-independence number of graphs}.
\newblock {\em Discrete Math.}, 342(10):2875--2885, 2019.

\bibitem{AKR2024}
A.~Abiad, A.~P. Khramova, and A.~Ravagnani.
\newblock Eigenvalue bounds for sum-rank metric codes.
\newblock {\em IEEE Trans. Inf. Theory}, 2024.

\bibitem{ANR2024}
A.~Abiad, A.~Neri, and L.~Reijnders.
\newblock Eigenvalue bounds for the distance-t chromatic number of a graph and
  their application to lee codes.
\newblock {\em arXiv:2404.14839}, 2024.

\bibitem{A1982}
J.~Astola.
\newblock The {L}ee-scheme and bounds for {L}ee-codes.
\newblock {\em Cybernet. Systems Anal.}, 13:331–--343, 1982.

\bibitem{brouwer1989distance}
A.~E. Brouwer, M.~Cohen, Arjeh, and A.~Neumaier.
\newblock {\em Distance-regular graphs}.
\newblock Springer, 1989.

\bibitem{byrne2021fundamental}
E.~Byrne, H.~Gluesing-Luerssen, and A.~Ravagnani.
\newblock Fundamental properties of sum-rank-metric codes.
\newblock {\em IEEE Trans. Inf. Theory}, 67(10):6456--6475, 2021.

\bibitem{delsarte1973algebraic}
P.~Delsarte.
\newblock An algebraic approach to the association schemes of coding theory.
\newblock {\em Philips Res. Rep. Suppl.}, 10, 1973.

\bibitem{D1978}
P.~Delsarte.
\newblock Bilinear forms over a finite field, with applications to coding
  theory.
\newblock {\em J. Comb. Theory Ser. A.}, 25(3):226--241, 1978.

\bibitem{delsarte1975alternating}
P.~Delsarte and J.-M. Goethals.
\newblock Alternating bilinear forms over {GF}$(q)$.
\newblock {\em J. Comb. Theory Ser. A.}, 19(1):26--50, 1975.

\bibitem{delsarte1998association}
P.~Delsarte and V.~I. Levenshtein.
\newblock Association schemes and coding theory.
\newblock {\em IEEE Trans. Inf. Theory}, 44(6):2477--2504, 1998.

\bibitem{DIL2020}
P.~J. Dukes, F.~Ihringer, and N.~Lindzey.
\newblock On the algebraic combinatorics of injections and its applications to
  injection codes.
\newblock {\em IEEE Trans. Inf. Theory}, 66(11):6898--6907, 2020.

\bibitem{F2020}
M.~A. Fiol.
\newblock {A new class of polynomials from the spectrum of a graph, and its
  application to bound the $k$-independence number}.
\newblock {\em Linear Algebra Appl.}, 605:1--20, 2020.

\bibitem{GQ2006}
R.~Gow and R.~Quinlan.
\newblock On the vanishing of subspaces of alternating bilinear forms.
\newblock {\em Linear and Multilinear Algebra}, 54(6):415--428, 2006.

\bibitem{gruica2022rank}
A.~Gruica, A.~Ravagnani, J.~Sheekey, and F.~Zullo.
\newblock Rank-metric codes, semifields, and the average critical problem.
\newblock {\em SIAM J. Discrete Math.}, 37(2):1079--1117, 2023.

\bibitem{haemers1995interlacing}
W.~H. Haemers.
\newblock Interlacing eigenvalues and graphs.
\newblock {\em Linear Algebra Appl.}, 226:593--616, 1995.

\bibitem{KN2022}
L.~Kavi and M.~Newman.
\newblock {The optimal bound on the $3$-independence number obtainable from a
  polynomial-type method}.
\newblock {\em Discrete Math.}, 346:113471, 2023.

\bibitem{ravagnani2018duality}
A.~Ravagnani.
\newblock Duality of codes supported on regular lattices, with an application
  to enumerative combinatorics.
\newblock {\em Des. Codes Cryptogr.}, 86(9):2035--2063, 2018.

\bibitem{R2010}
A.~Roy.
\newblock Bounds for codes and designs in complex subspaces.
\newblock {\em J. Algebr. Comb.}, 31:1--32, 2010.

\bibitem{S1986}
P.~Sol\'e.
\newblock The {L}ee association scheme.
\newblock 1986.

\end{thebibliography}

\end{document}